\documentclass[12pt]{article}

\usepackage[latin1]{inputenc}
\usepackage{subfigure}
\usepackage{amsmath,amssymb,amsthm}
\usepackage[mathscr]{eucal}
\usepackage{eepic}
\usepackage{gastex}
\usepackage{cite}
\usepackage{graphicx}

\newtheorem{Lemma}{Lemma}
\newtheorem{Proposition}{Proposition}
\newtheorem{Theorem}{Theorem}
\newtheorem{Example}{Example}

\newtheorem{Corollary}{Corollary}

\newcommand{\is}{\approx}
\newcommand{\ov}{\overline}
\newcommand{\wh}{\widehat}
\DeclareMathOperator{\integers}{\mathbb{N}}
\DeclareMathOperator{\Sing}{\mathrm{Sing}}

\author{Inna Mikhailova\footnote{Ural Federal University, Ekaterinburg, Russia.}}
\title{ A proof of Zhil'tsov's theorem on decidability of equational theory of epigroups}

\begin{document}
\maketitle
\begin{abstract}
Epigroups are semigroups equipped with an additional unary
operation called pseudoinversion. Each finite semigroup can be
considered as an epigroup. We prove the following theorem announced
by Zhil'tsov in 2000: the equational theory of the class of all
epigroups coincides with the equational theory of the class of all
finite epigroups and is decidable. We show that the theory is not
finitely based but provide a transparent infinite basis for it.
\end{abstract}

\section{Introduction}
\label{sec:in} A semigroup $S$ is called an \emph{epigroup} if,
for every element $x\in S$, some power of $x$ belongs to a
subgroup of $S$. The class of all epigroups includes
all periodic semigroups (i.e., semigroups in which each element
has an idempotent power), all completely regular semigroups (i.e.,
unions of groups), and many other important classes of semigroups.
See \cite{Shevrin94_1,Shevrin94_2} and the
survey~\cite{Shevrin_survey} for more examples and an introduction
to the structure theory of epigroups.

It is known that, for every element $x$ of an epigroup $S$, there
exists a unique maximal subgroup $G_x$ that contains all but
finitely many powers of $x$. Let $e_x$ stand for the identity
element of $G_x$. Then it is known that $xe_x=e_xx$ and that the
product belongs to $G_x$. The latter fact allows one to consider
the inverse of $xe_x$ in the group $G_x$; we denote this inverse
by $\ov{x}$. This defines the unary operation $x\mapsto\ov{x}$ on
each epigroup; we call this operation \emph{pseudoinversion}.
Thus, epigroups can be treated as unary semigroups, that is, as
algebras with two operations: multiplication and pseudoinversion,
and we shall adopt this meaning of the term `epigroup' throughout.
Let $\mathfrak{E}$ stand for the class of all epigroups.

A systematic study of epigroups as unary semigroups was initiated
by Lev Shevrin in \cite{Shevrin94_1,Shevrin94_2,Shevrin_survey}.
In particular, Shevrin suggested to investigate the
collection of all unary semigroup identities holding in all
epigroups, that is, the \emph{equational theory} of $\mathfrak{E}$
as a class of unary semigroups\footnote{In
order to prevent any chance of confusion, we mention here that the
class $\mathfrak{E}$ does not form a variety of unary semigroups;
of course, this is not an obstacle for considering the equational
theory of $\mathfrak{E}$.}. The most fundamental questions about
this theory are whether or not it is decidable and whether or not
it is finitely axiomatizable.

Yet another motivation for studying unary identities of epigroups
comes from the theory of finite semigroups. Every finite semigroup
can be treated as an epigroup, and the unary operation of
pseudoinversion is an \emph{implicit operation} in the sense of
Jan Reiterman~\cite{Reiterman}, that is, it commutes with the homomorphisms
between finite semigroups. Therefore, for each collection $\Sigma$ of
unary identities of epigroups, the class of all finite semigroups
satisfying $\Sigma$ is a pseudovariety. In fact, many important
pseudovarieties can be defined by identities involving the operation
of pseudoinversion (which within the realm of finite semigroups is
usually denoted by $x\mapsto x^{\omega-1}$) or the operation
$x\mapsto x^\omega:=\ov{x}x$; see \cite{Almeida}
for plentiful examples.

We denote by $\mathfrak{E}_{fin}$ the class of all finite epigroups.  Along with $\mathfrak{E}$ and $\mathfrak{E}_{fin}$, among important classes of epigroups is the class $\mathfrak{A}_{fin}$ consisting of finite combinatorial (or aperiodic) semigroups. Recall that a semigroup is \emph{combinatorial} if all its maximal subgroups are one-element.

At the end of the 1990s Ilya Zhil'tsov began to study the decidability problem for the equational theory of $\mathfrak{A}_{fin}$ (considered as a class of epigroups) along with a more general question. First results he obtained were published in~\cite{ZH_art} and a full solution of the problem for $\mathfrak{A}_{fin}$ (found independently of Jon McCammond's solution in~\cite{McM}) was announced in~\cite{ZH_docl}. The paper~\cite{ZH_docl} also contained similar results for the pseudovariety $\mathfrak{E}_{fin}$. We collect these results in the following statement:

\begin{Theorem}\label{THM_Main}
The equational theory of the class $\mathfrak{E}$ coincides with the equational theory of the class $\mathfrak{E}_{fin}$ and is decidable. The theory is not finitely based but has the following infinite identity basis:
\begin{gather*}
   (xy)z\is x(yz),\\
   \ov{(xy)}x\is x\ov{(yx)},\\
    \ov{x}^2x\is \ov{x},\\
    x^2\ov{x}\is\ov{\ov{x}},\\
    \ov{\ov{x}x}\is\ov{x}x,\\
    \ov{x^p}\is \ov{x}^p \hbox to 0mm{\quad\text{for each prime $p$.}}
\end{gather*}
\end{Theorem}

Very unfortunately, soon after the announcement~\cite{ZH_docl} had appeared, Zhil'tsov died in a tragic accident and left no implementation of the statements indicated in~\cite{ZH_docl}. It took us considerable effort to reconstruct all necessary steps of the proof. In Sections~\ref{sec:Z-unary words} and~\ref{sec:Normal swords} we follow Zhil'tsov's plan outlined in~\cite{ZH_docl} quite closely while in Section~\ref{sec:Proof of the theorem} we choose a somewhat different way.

In Section~\ref{sec:Z-unary words} we introduce Zhil'tsov's concept of a $\mathbb{Z}$-unary word. He suggested to consider words with not just one but countably many additional unary operations.  In Section~\ref{sec:Normal swords}, we consider normal forms for $\mathbb{Z}$-unary words and prove two of Zhil'tsov's propositions. First, it is possible to algorithmically construct the normal form. Second, there is an algorithm that, for two  $Z$-words in normal form, returns their longest common prefix [suffix].  In particular, this means that there is an algorithm that decides whether the normal forms of given $\mathbb{Z}$-unary words coincide. In Section~\ref{sec:Proof of the theorem} we consider epigroup terms as $\mathbb{Z}$-unary words and show that an epigroup identity holds in each epigroup if and only if the normal form of the left-hand side of the identity coincides with the normal form of its right-hand side.

When the results forming this paper had already been obtained and the paper was being prepared for publication, the author learned about Jos\'e Carlos Costa's preprint \cite{Costa_preprint} also dealing with the equational theory of $\mathfrak{E}_{fin}$; later Costa's paper~\cite{Costa} was published. Let us briefly comment on the relation between~\cite{Costa} and the present paper. Both papers provide algorithms to decide the equational theory of $\mathfrak{E}_{fin}$ and bases for the theory. However, the two algorithms utilize essentially different approaches and their justifications come from different sources. The bases found in~\cite{Costa} and in the present paper are, of course, equivalent.

\section{$\mathbb{Z}$-unary words}
\label{sec:Z-unary words}

\subsection{Basic definitions}

We fix an \emph{alphabet} $A$, that is, a non-empty set which elements are referred to as \emph{letters}. As usual, by $A^+$ we denote the \emph{free semigroup} over $A$, that is, the set of all non-empty \emph{words} over $A$ which are multiplied by concatenation. We extend $A^+$ to a larger algebra that we denote by $Z(A)$ and call the \emph{free} $Z$-\emph{unary semigroup} over $A$, the elements of $Z(A)$ being called $Z$-\emph{unary words} over $A$. For this we fix a symbol $\omega$ and define the notions of a $\mathbb{Z}$-unary word and of its \emph{height} by simultaneous induction as follows:
\begin{enumerate}
 \item[1)] the empty word is a $\mathbb{Z}$-unary word of height $-\infty$;
 \item[2)] every letter from $A$ is a $\mathbb{Z}$-unary word of height $0$;
 \item[3)] for every integer $q\in\mathbb{Z}$ and every $\mathbb{Z}$-unary word $\sigma$ of height $h$,
 the expression $(\sigma)^{\omega+q}$ is a $\mathbb{Z}$-unary word of height $h+1$;
 \item[4)] for every pair $\sigma_1,\sigma_2$ of $\mathbb{Z}$-unary words of heights $h_1$ and respectively $h_2$, the expression $\sigma_1\sigma_2$ is a $\mathbb{Z}$-unary word of height $\max\{h_1,h_2\}$.
\end{enumerate}
For example, the expression
\[
(x^{\omega-4}yx^{\omega+30})^{\omega-1}xy^{\omega}
\]
is a $\mathbb{Z}$-unary word over $\{x,y\}$ of height 2. This example also illustrates three natural conventions that we adopt throughout: we omit parentheses in expressions like $(\sigma)^{\omega+q}$ whenever $\sigma$ is just a letter and take the liberty to write $\omega$ instead of $\omega+0$ and $\omega-q$ instead of $\omega+(-q)$ for $q$ being a positive integer.

We denote the height of $\sigma\in Z(A)$ by $h(\sigma)$. It is easy to verify that every $\mathbb{Z}$-unary word $\sigma$ of height $h+1$ with $h\ge 0$ can be uniquely represented as
\begin{equation}
\label{eq:represent}
\sigma=\pi_0\rho_1^{\omega+q_1}\pi_1\cdots \rho_n^{\omega+q_n}\pi_n,
\end{equation}
where $n\geq 1$, $h(\rho_i)=h$ for all $i=1,\dots,n$, and $h(\pi_i)\le h$ for all $i=0,\dots,n$.
We call \eqref{eq:represent} the \emph{height representation} of $\sigma$.

We define the mapping $\sigma\mapsto|\sigma|$ from the free $\mathbb{Z}$-unary semigroup $Z(A)$ into the ring
$\mathbb{Z}[\omega]$ of all polynomials in $\omega$ with integer coefficients as follows:
\begin{enumerate}
  \item[1)] $|\sigma|:=0$ if $\sigma$ is the empty word;
  \item[2)] $|\sigma|:=1$ whenever $\sigma$ is a letter from $A$;
  \item[3)] $|\sigma^{\omega+q}|:=(\omega+q)|\sigma|$ for every integer $q\in\mathbb{Z}$ and every $\sigma\in Z(A)$;
  \item[4)] $|\sigma\tau|:=|\sigma|+|\tau|$ for all $\sigma,\tau\in Z(A)$.
\end{enumerate}
We call $|\sigma|$ the \emph{length} of the $\mathbb{Z}$-unary word $\sigma$. Observe that $h(\sigma)$ is just is the degree $\deg|\sigma|$ of the polynomial $|\sigma|$. For two polynomials $f,g\in\mathbb{Z}[\omega]$, we write $f\ge g$ if the leading coefficient of the polynomial $f-g$ is non-negative. Then $|\sigma|\ge0$ for every $\sigma\in Z(A)$.

We say that a $\mathbb{Z}$-unary word $\tau$ is a \emph{prefix} [resp. \emph{suffix}] of $\sigma\in Z(A)$ if $\sigma=\tau\rho$ [resp. $\sigma=\rho\tau$] for some $\rho\in Z(A)$. A $\mathbb{Z}$-unary word $\tau$ is a \emph{factor} of $\sigma$ if $\sigma=\rho_1\tau\rho_2$ for some $\rho_1,\rho_2\in Z(A)$ and it is a \emph{power} of $\sigma$ if $\tau=\sigma^n$ for some $n\ge2$. Likewise, $\mathbb{Z}$-unary words of the form $\sigma^{\omega+q}$, where $q$ is an integer, are called $\omega$-\emph{powers} of $\sigma$. Observe that the free $\mathbb{Z}$-unary semigroup $Z(A)$ considered as a semigroup is just the free monoid generated by $A$ and all $\omega$-powers.

\subsection{Singular words}

Consider the fully invariant congruence $\mathcal{S}$ on the free $\mathbb{Z}$-unary semigroup $Z(A)$ generated by the pairs
  \begin{gather}
    (x^{\omega+q}, xx^{\omega+q-1}),\label{eq:wind1}\\
    (x^{\omega+q}, x^{\omega+q-1}x),\label{eq:wind2}\\
    (x(yx)^{\omega+q}, (xy)^{\omega+q}x)\label{eq:roll},
  \end{gather}
where $q$ runs over $\mathbb{N}$.

By a \emph{singular word} or, shortly, a \emph{sword} we mean an $\mathcal{S}$-class. The class corresponding to a given $\mathbb{Z}$-unary word $\sigma$ is denoted by $\sigma^{\mathcal{S}}$. Notice that if $\sigma$ is an ordinary word, that is, $\sigma\in A^+$, then $\sigma^{\mathcal{S}}=\{\sigma\}$. The quotient algebra $Z(A)/\mathcal{S}$ of all swords is denoted by $\Sing(A)$.

\begin{Example}[Zhil'tsov]
Consider the $\mathbb{Z}$-unary word $x(xyz)^{\omega-5}xy(zx)^{\omega+4}zz$ of height \textup1 and of length $5\omega-2$. We can represent it by the following picture:
\begin{center}
\includegraphics{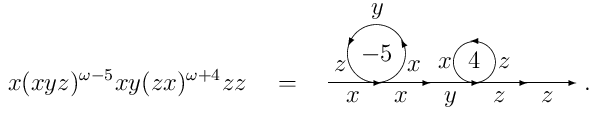}\\
\emph{Figure 1: Pictorial representation if a $\mathbb{Z}$-unary word}\\[1ex]
\end{center}
Here the $\omega$-powers $(xyz)^{\omega-5}$ and $(zx)^{\omega+4}$ are represented by the circles labeled $xyz$ and $zx$ with encircled numbers $-5$ and $4$ respectively. Now we draw the analogous pictures for three further $\mathbb{Z}$-unary words that belong to the same $\mathcal{S}$-class.
\begin{center}
\includegraphics{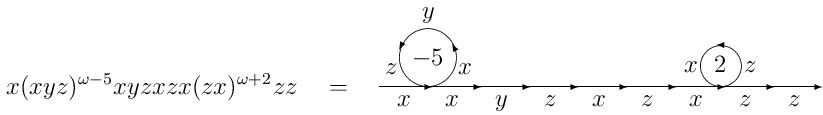}\\
\emph{Figure 2: Unwinding}\\[1ex]
\includegraphics{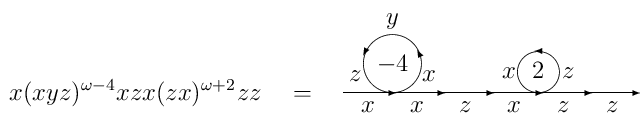}\\
\emph{Figure 3: Winding up}\\[1ex]
\includegraphics{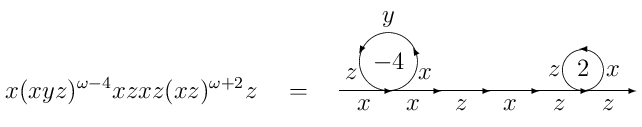}\\
\emph{Figure 4: Rolling}\\[1ex]
\end{center}
These pictures illustrate three basic transformations that one can perform on a $\mathbb{Z}$-unary word without changing its  $\mathcal{S}$-class.
    \begin{itemize}
      \item One can ``unwind'' $n$ copies of a circle to the left or the right side of the circle, simultaneously decreasing its encircled number by $n$ --- in Fig.~2 this transformation is applied to the circle labeled $zx$ in Fig.~1 with $n=2$. This corresponds to $n$ applications of either \eqref{eq:wind1} or  \eqref{eq:wind2} from left to right.
      \item One can ``wind up'' (if it is possible) $n$ copies of a circle from the left or the right side of the circle, simultaneously increasing its encircled number by $n$  --- in Fig.~3 this transformation is applied to the circle labeled $xyz$ in Fig.~2 with $n=1$. This corresponds to $n$ applications of either \eqref{eq:wind1} or  \eqref{eq:wind2} from right to left.
      \item One can ``roll'' a circle to the left (right) side if some suffix (resp. prefix) of the circle's label is written before (resp. after) the circle --- in Fig.~4 this transformation is applied to the circle labeled $zx$ in Fig.~3. This corresponds to an application of \eqref{eq:roll}.
    \end{itemize}
\end{Example}

Thus, any sword can be considered as an ordinary word with some attached ``singularities'' ($\omega$-powers that may contain further $\omega$-powers etc.) that can move along the word. This explains our terminology.

We refer to the basic transformations illustrated in Example~1 as $\mathcal{S}$-\emph{transformations}. Each $\mathcal{S}$-trans\-for\-ma\-tion involves a certain $\omega$-power which we call the \emph{site} of the transformation. Clearly, two $\mathbb{Z}$-unary words belong to the same  $\mathcal{S}$-class if and only if one can obtain each of these words form the other by a suitable sequence of $\mathcal{S}$-transformations.

Notice that, due to the definition of the congruence $\mathcal{S}$, the following conditions hold
for all $\mathbb{Z}$-unary words $\sigma,\tau$ in the same $\mathcal{S}$-class:
  \begin{itemize}
   \item $\sigma$ and $\tau$ have the same height;
   \item $\sigma$ and $\tau$ have the same length.
  \end{itemize}
Thus, we can well define the \emph{height} and the \emph{length} of a sword $\sigma^{\mathcal{S}}$ as the height and respectively the length of $\sigma$. We can also speak about a height representation of a sword; observe, however, that in contrast to Z-unary words, a sword may have several height representations.

We say that a sword $\tau^{\mathcal{S}}$ is a \emph{prefix} [\emph{suffix}, \emph{factor}] of a sword $\sigma^{\mathcal{S}}$ if, for some $\mathbb{Z}$-unary words $\tau',\sigma'$ such that $(\tau')^\mathcal{S}=\tau^{\mathcal{S}}$ and $(\sigma')^\mathcal{S}=\sigma^{\mathcal{S}}$, the $\mathbb{Z}$-unary word $\tau'$ is a prefix [resp.\ suffix, factor] of $\sigma'$. In the same way, the notions of a power and of an $\omega$-power extend to swords.

As we will see, the algebra $\Sing(A)$ considered as a semigroup inherits several important properties from the free semigroup. Of course, there are some differences too. For instance,  each sword of height more than zero has infinitely many factors. Further, for every ordinary word $w$ and every integer $t$ such that $0\le t\le |w|$, there exists a prefix of $w$ with length $t$. This is not true in general swords: for example, the sword $((x^2)^{\omega})^{\mathcal{S}}$ has no prefix of length $\omega$.

\begin{Lemma}\label{Lemma_Find_Prefix_of_certain_length}
There is an algorithm that, given an arbitrary sword $\sigma^{\mathcal{S}}$ and a polynomial $f(\omega)\in\mathbb{Z}[w]$ such that the leading coefficient of $|\sigma|-f(\omega)$ is non-negative, decides whether $\sigma^{\mathcal{S}}$ has a prefix of length $f(\omega)$. If the answer is positive, the algorithm constructs swords $x^{\mathcal{S}},y^{\mathcal{S}}$ such that $\sigma^{\mathcal{S}}=x^{\mathcal{S}}y^{\mathcal{S}}$ and $|x|=f(\omega)$.
\end{Lemma}

\begin{proof}
We induct on the height of $\sigma$. As we have noticed prior to the formulation of the lemma, its statement holds for ordinary words and hence for swords of height $0$.

Let $h(\sigma)=h+1$ and let \eqref{eq:represent} be the height representation of $\sigma$. Since $f(\omega)\leq |\sigma|$, there is an index $k$ such that
\[
|\pi_0\cdots\rho_k^{\omega+q_k}|\leq f(\omega)\leq |\pi_0\cdots\rho_k^{\omega+q_k}\pi_{k}\rho_{k+1}^{\omega+q_{k+1}}|.
\]
(If $\deg f(\omega)<h+1$, then $k=0$ and the above inequality takes the form $0\le f(\omega)\le|\pi_0\rho_1^{\omega+q_1}|$.) Let $g(\omega)=f(\omega)-|\pi_0\cdots\rho_k^{\omega+q_k}|$. (In the case where $k=0$, we let $g(\omega)=f(\omega)$.) Clearly, $\sigma^{\mathcal{S}}$ has a prefix of length $f(\omega)$ if and only if the sword $(\pi_k\rho_{k+1}^{\omega+q_{k+1}})^{\mathcal{S}}$ has a prefix of length $g(\omega)$.

First suppose that $\deg g(\omega)< h+1$. Then it is possible to choose a positive integer $m$ such that
the leading coefficient of $|\pi^k\rho_{k+1}^m|-g(\omega)$ is non-negative, and we are in a position to apply the induction hypothesis to the sword $(\pi^k\rho_{k+1}^m)^{\mathcal{S}}$ and the polynomial $g(\omega)$. If the answer is positive and $(\pi^k\rho_{k+1}^m)^{\mathcal{S}}=x_1^{\mathcal{S}}y_1^{\mathcal{S}}$ with $|x_1|=g(\omega)$, then the required swords are $x^{\mathcal{S}}=(\pi_0\cdots\rho_k^{\omega+q_k}x_1)^{\mathcal{S}}$ and $y^{\mathcal{S}}=(y_1\rho_{k+1}^{\omega+q_{k+1}-m}\cdots \pi_n)^{\mathcal{S}}$.

Now suppose that $\deg g(\omega)=h+1$. It is easy to see that if the required prefix exists then it has the form $(\pi_k\rho_{k+1}^{\omega+t}\eta)^{\mathcal{S}}$ for some integer $t\le q_{k+1}$ and some prefix $\eta$ of the $\mathbb{Z}$-unary word $\rho_{k+1}$. Since $h(\pi_k)\le h$ and $h(\eta)\le h$, we conclude that, by the definition of the length, the leading coefficients of the polynomials $|\pi_k\rho_{k+1}^{\omega+t}\eta|$, $|\pi_k\rho_{k+1}^{\omega+q_{k+1}}|$, and $g(\omega)$ must coincide. If this holds, then we choose an integer $s<q_{k+1}$ such that the leading coefficient of the polynomial $g'(\omega)=|\pi_k\rho_{k+1}^{\omega+s}|-g(\omega)$ is non-negative. Since $\deg g'(\omega)<h+1$, we can apply the induction hypothesis to the sword $(\rho_{k+1}^{q_{k+1}-s})^{\mathcal{S}}$ and the polynomial $g'(\omega)$, obtaining either a negative answer or a decomposition $(\rho_{k+1}^{q_{k+1}-s})^{\mathcal{S}}=x_1^{\mathcal{S}}y_1^{\mathcal{S}}$. Then the required prefix is $x^{\mathcal{S}}=(\pi_0\cdots\rho_k^{\omega+q_k}\pi_k\rho_{k+1}^{\omega+s}x_1)^{\mathcal{S}}$.
\end{proof}

\begin{Lemma}\label{LemmaSingProperties}
\emph{1.} $\Sing(A)$ is a $\mathscr{J}$-trivial cancellative semigroup;

\emph{2.} The set of all prefixes \textup[suffixes\textup] of a sword is linearly ordered by right \textup[resp.\ left\textup] division.
\end{Lemma}

\begin{proof}
Let us start with the proof of the cancelation property. Suppose that $\sigma^{\mathcal{S}}\tau_1^{\mathcal{S}}
=\sigma^{\mathcal{S}}\tau_2^{\mathcal{S}}$. This implies $(\sigma\tau_1)^{\mathcal{S}}=(\sigma\tau_2)^{\mathcal{S}}$.
Therefore there exists a sequence of $\mathbb{Z}$-unary words $\sigma\tau_1=\eta_1,\eta_2,\dots,\eta_n=\sigma\tau_2$ such that for each $i=1,\dots,n-1$, the word $\eta_{i+1}$ can be obtained from the word $\eta_i$ by one of the $\mathcal{S}$-transformations: ``unwinding'', ``winding up'', or ``rolling''. We will construct two sequences of $\mathbb{Z}$-unary words $\alpha_1,\dots,\alpha_n$ and $\beta_1,\dots,\beta_n$ with the following properties:
\begin{itemize}
\item[(i)] $\alpha_1=\sigma$, $\beta_1=\tau_1$;
\item[(ii)] for each $i=1,\dots,n-1$, one can pass from $\alpha_i$ to $\alpha_{i+1}$ and from $\beta_i$ to $\beta_{i+1}$ by a sequence of $\mathcal{S}$-transformations;
\item[(iii)] for each $i=1,\dots,n$, the $\mathbb{Z}$-unary word $\alpha_i\beta_i$ can be obtained from $\eta_i$ by applying only ``unwinding'' $\mathcal{S}$-transformations.
\end{itemize}
This will imply the desired conclusion $\tau_1^{\mathcal{S}}=\tau_2^{\mathcal{S}}$. Indeed, by (i) and (ii), we have $\sigma=\alpha_1\mathrel{\mathcal{S}}\alpha_n$ and $\tau_1=\beta_1\mathrel{\mathcal{S}}\beta_n$ whence, in particular, $|\sigma|=|\alpha_n|$. By (iii), $\alpha_n\beta_n$ can be obtained from $\eta_n=\sigma\tau_2$ by ``unwinding'' $\mathcal{S}$-transformations only. The definition of an ``unwinding'' $\mathcal{S}$-transformation implies that if such a transformation is applied to a product of two $\mathbb{Z}$-unary words,
its site is contained in one of the factors. Thus, we conclude that $\alpha_n\beta_n=\gamma\delta$, where $\gamma$ and $\delta$ are obtained by some ``unwinding'' $\mathcal{S}$-transformations from $\sigma$ and $\tau_2$ respectively. In particular, $\sigma\mathrel{\mathcal{S}}\gamma$ and $\tau_2\mathrel{\mathcal{S}}\delta$. Since $|\gamma|=|\sigma|=|\alpha_n|$, we have $\gamma=\alpha_n$ whence $\delta=\beta_n$. This yields
$\tau_1\mathrel{\mathcal{S}}\beta_n=\delta\mathrel{\mathcal{S}}\tau_2$ and $\tau_1\mathrel{\mathcal{S}}\tau_2$, as required.

Thus, it remains to construct the sequences $\alpha_1,\dots,\alpha_n$ and $\beta_1,\dots,\beta_n$ satisfying (i)--(iii). We proceed by induction, using (i) as the induction basis. Suppose that the $\mathbb{Z}$-unary words $\alpha_i$ and $\beta_i$ have already been defined. If the $\mathcal{S}$-transformation $\Phi$ applied to obtain $\eta_{i+1}$ from $\eta_i$ is of the type ``winding up'', we let $\alpha_{i+1}=\alpha_i$, $\beta_{i+1}=\beta_i$. If $\Phi$ is of the type ``unwinding'', one of the two possibilities occur: either $\Phi$ is one of the $\mathcal{S}$-transformations employed to convert $\eta_i$ into $\alpha_i\beta_i$ or not. In the former case
we again let $\alpha_{i+1}=\alpha_i$, $\beta_{i+1}=\beta_i$. In the latter case, the site of $\Phi$ persists in $\alpha_i\beta_i$ and, as already mentioned, it is located within one of the factors $\alpha_i$ or $\beta_i$.
If the site lies within $\alpha_i$, we let $\alpha_{i+1}$ be the result of applying $\Phi$ to $\alpha_i$ and let $\beta_{i+1}=\beta_i$; otherwise we let $\alpha_{i+1}=\alpha_i$ and let $\beta_{i+1}$ be the result of applying $\Phi$ to $\beta_i$.

Now let $\Phi$ be of the type ``rolling''. By symmetry, we may assume that $\Phi$ corresponds to an application of \eqref{eq:roll} from right to left, that is, the site $(xy)^{\omega+q}$ of $\Phi$ is followed by $x$ in $\eta_i$ and ``rolls'' to the right producing $x(yx)^{\omega+q}$ in $\eta_{i+1}$. Since $\alpha_i\beta_i$ is obtained from $\eta_i$ by ``unwinding'' $\mathcal{S}$-transformations, $\alpha_i\beta_i$ may contain several copies of the site of $\Phi$---for illustration, if, say, $\eta_i=(z(xy)^{\omega+q}x)^{\omega+r}$, then unwinding the external $\omega$-power gives $\mathbb{Z}$-unary words of the form $(z(xy)^{\omega+q}x)^s(z(xy)^{\omega+q}x)^{\omega+r-s-t}(z(xy)^{\omega+q}x)^t$, with $s+t+1$ occurrences of the site. We apply $\Phi$ to all copies of $(xy)^{\omega+q}x$ that occur within $\alpha_i$ or $\beta_i$. Besides that,
it may happen that some copies of the site have undergone unwinding themselves, resulting in a $\mathbb{Z}$-unary word of the form $\xi=(xy)^r(xy)^{\omega+q-r-s}(xy)^sx$. Whenever such a $\xi$ lies entirely within $\alpha_i$ or $\beta_i$, we substitute it by $x(yx)^r(yx)^{\omega+q-r-s}(yx)^s$---clearly, this amounts to ``rolling'' of $(xy)^{\omega+q-r-s}$ to the right. If $\xi$ occurs on the junction of $\alpha_i$ and $\beta_i$, we can basically do the same except for the only case where $(xy)^r(xy)^{\omega+q-r-s}$ is a suffix of $\alpha_i$ while $(xy)^sx$ is a prefix of $\beta_i$ because in this case we cannot ``roll'' $(xy)^{\omega+q-r-s}$ to the right within $\alpha_i$. In this remaining case, when passing from $\alpha_i$ to $\alpha_{i+1}$, we substitute $(xy)^r(xy)^{\omega+q-r-s}$ by $x(yx)^r(yx)^{\omega+q-r-s-1}y$. This concludes the induction step.

Now let us show that $\Sing(A)$ is $\mathscr{J}$-trivial. Indeed, assume that some swords $\alpha^{\mathcal{S}},\beta^{\mathcal{S}}$ are $\mathscr{J}$-related. Then $\alpha^{\mathcal{S}}=\sigma_1^{\mathcal{S}}\beta^{\mathcal{S}}\tau_1^{\mathcal{S}}$ and $\beta^{\mathcal{S}}=\sigma_2^{\mathcal{S}}\alpha^{\mathcal{S}}\tau_2^{\mathcal{S}}$ for some
$\sigma_1,\sigma_2,\tau_1,\tau_2\in Z(A)$. We have $\alpha^{\mathcal{S}}=
\sigma_1^{\mathcal{S}}\sigma_2^{\mathcal{S}}\alpha^{\mathcal{S}}\tau_2^{\mathcal{S}}\tau_1^{\mathcal{S}}$ whence $|\sigma_1|=|\sigma_2|=|\tau_1|=|\tau_2|=0$ and all these $\mathbb{Z}$-unary words must be empty. Hence $\alpha^{\mathcal{S}}=\beta^{\mathcal{S}}$.

The $\mathscr{J}$-triviality of the semigroup $\Sing(A)$ implies that the relations of right and left division are partial orders on $\Sing(A)$. We aim to show that their restrictions to the set of all prefixes [suffixes] of a fixed sword $\sigma^{\mathcal{S}}$ are linear orders. By symmetry, it suffices to consider right division only. Thus, suppose that $\sigma^{\mathcal{S}}=\tau_1^{\mathcal{S}}\rho_1^{\mathcal{S}}=\tau_2^{\mathcal{S}}\rho_2^{\mathcal{S}}$ and $|\tau_1|\le|\tau_2|$.  Since $\tau_1\rho_1\mathrel{\mathcal{S}}\tau_2\rho_2$, the sword $(\tau_2\rho_2)^{\mathcal{S}}$ has a prefix of length $|\tau_1|$ that should be also a prefix of the sword $(\tau_2)^{\mathcal{S}}$. Hence, using the algorithm of Lemma~\ref{Lemma_Find_Prefix_of_certain_length}, we can find two swords $x^{\mathcal{S}}$ and $y^{\mathcal{S}}$ such that  $\tau_2^{\mathcal{S}}=x^{\mathcal{S}}y^{\mathcal{S}}$ and $|x|=|\tau_1|$.

Since $\tau_1\rho_1\mathrel{\mathcal{S}}xy\rho_2$, there exists a sequence of $\mathbb{Z}$-unary words $\eta_1=\tau_1\rho_1, \eta_2,\dots,\eta_n=xy\rho_2$ such that $\eta_{i+1}$ can be obtained from $\eta_i$ by an application of a suitable $\mathcal{S}$-transformation for each $i=1,\dots,n-1$. Now we follow the proof of the cancelation property above and construct two sequences of $\mathbb{Z}$-unary words $\alpha_1,\dots,\alpha_n$ and $\beta_1,\dots,\beta_n$ such that
\begin{itemize}
\item[(i)] $\alpha_1=\tau_1$, $\beta_1=\rho_1$;
\item[(ii)] for each $i=1,\dots,n-1$, one can pass from $\alpha_i$ to $\alpha_{i+1}$ and from $\beta_i$ to $\beta_{i+1}$ by a sequence of $\mathcal{S}$-transformations;
\item[(iii)] for each $i=1,\dots,n$, the $\mathbb{Z}$-unary word $\alpha_i\beta_i$ can be obtained from $\eta_i$ by applying only ``unwinding'' $\mathcal{S}$-transformations.
\end{itemize}
Then the same argument as the one we have utilized in the proof of the cancelation property shows that $\tau_1=\alpha_1\mathrel{\mathcal{S}}\alpha_n\mathrel{\mathcal{S}}x$ whence $\tau_1^{\mathcal{S}}=x^{\mathcal{S}}$ divides $\tau_2^{\mathcal{S}}$ on the right.
\end{proof}

\section{Normal swords}
\label{sec:Normal swords}

\subsection{Overview}

In this section we introduce a rewriting system that plays a key role for the decidability question.

Let $\mathcal{R}$ denote the rewriting system on the free $\mathbb{Z}$-unary semigroup $Z(A)$ consisting of the rules
 \begin{gather}
   x^{\omega+q}x^{\omega+r}\stackrel{\mathcal{R}}{\longrightarrow} x^{\omega+q+r},\label{eq:addition}\\
   (x^n)^{\omega+q}\stackrel{\mathcal{R}}{\longrightarrow} x^{\omega+nq},\label{eq:multiplication1}\\
   (x^{\omega+r})^{\omega+q}\stackrel{\mathcal{R}}{\longrightarrow} x^{\omega+rq}\label{eq:multiplication2}
 \end{gather}
for all $x\in Z(A)$, all $r,q\in\mathbb{N}$, and all $n\geq 2$. Observe that all these rules are length-decreasing; the rule~\eqref{eq:multiplication2} is also height-decreasing while~\eqref{eq:addition} and~\eqref{eq:multiplication1} do not change the height. The rewriting system $\mathcal{R}$ induces the ``quotient'' rewriting system $\mathcal{R}/\mathcal{S}$ on the quotient semigroup $\Sing(A)=Z(A)/\mathcal{S}$ as follows:
$\alpha^{\mathcal{S}}\stackrel{\mathcal{R}/\mathcal{S}}{\longrightarrow} \beta^{\mathcal{S}}$ if and only if there exist $\mathbb{Z}$-unary words $\sigma$ and $\tau$ such that $\alpha^{\mathcal{S}}=\sigma^{\mathcal{S}}$,
$\beta^{\mathcal{S}}=\tau^{\mathcal{S}}$, and $\sigma\stackrel{\mathcal{R}}{\longrightarrow}\tau$. By $(\mathcal{R}/\mathcal{S})^*$ we denote the reflexive and transitive closure of $\mathcal{R}/\mathcal{S}$.

A \emph{normal sword} (or an $\mathcal{R}/\mathcal{S}$-\emph{irreducible sword}) is a sword $\alpha^{\mathcal{S}}$ such that there is no sword $\beta^{\mathcal{S}}$ with $\alpha^{\mathcal{S}}\stackrel{\mathcal{R}/\mathcal{S}}{\longrightarrow} \beta^{\mathcal{S}}$. Observe that every factor of a normal sword is also normal. A sword $\alpha^{\mathcal{S}}$ is called \emph{fully normal} if its square $(\alpha^{\mathcal{S}})^2$ is normal. It is easy to see that then $(\alpha^{\mathcal{S}})^n$ is normal for all $n\geq 2$.

Any normal sword which is $(\mathcal{R}/\mathcal{S})^*$-related to a given sword $\sigma^{\mathcal{S}}$ is said to be a \emph{normal form} of $\sigma^{\mathcal{S}}$. We stress that at this point we claim no uniqueness nor confluence: a sword may have several normal forms.

Up to now, we have always tried to differentiate between a sword (i.e., an $\mathcal{S}$-class) and a $\mathbb{Z}$-unary word representing this $\mathcal{S}$-class. In the sequel, in order to lighten the notation, we allow ourselves to omit the superscript ${}^\mathcal{S}$ so that a sword, say, $\alpha^{\mathcal{S}}$ may be denoted by just $\alpha$.

This definition of a normal sword implies that such a sword contains no factors of the forms $x^{\omega+r}x^{\omega+q}$, $(x^n)^{\omega+q}$, $(x^{\omega+r})^{\omega+q}$. Notice also that if a sword is fully normal, then it is obviously normal and cannot have the form $x^{\omega+m}$ nor $x^{\omega+r}yx^{\omega+q}$. Clearly, all ordinary words are fully normal.

The main goal of this section is to prove the two following propositions.
\begin{Proposition}\label{Prop_Normalization}
There exists an algorithm that, given a sword $\sigma$ of height $h$, returns a normal form of $\sigma$.
\end{Proposition}
\begin{Proposition}\label{Prop_LongestCommonPrefix}
There exists an algorithm that, given two normal swords of height at most $h$, returns their longest common prefix.
\end{Proposition}

We have included the parameter $h$ in the formulations of Propositions~\ref{Prop_Normalization} and~\ref{Prop_LongestCommonPrefix} because we are going to prove these propositions, along with Lemmas~\ref{Lemma_Conjugates}--\ref{Lemma_Algorythm_Why_Not_Normal} formulated below, by simultaneous induction on $h$. All these statements obviously hold for swords of height 0 (i.e., for ordinary words), and we will assume that they hold true for all swords of height $h-1$.

We will also use the following corollary of Proposition~\ref{Prop_LongestCommonPrefix}:

\begin{Corollary}\label{Cor_Compare_Swords}
There is an algorithm that checks whether or not two given normal swords are equal.
\end{Corollary}

\begin{proof}
Let $\sigma,\tau$ be normal swords. Using the algorithm of Proposition~\ref{Prop_LongestCommonPrefix}, we find their longest common prefix $\rho$. Clearly, $\sigma=\tau$ if and only if  $\sigma=\rho$ and $\tau=\rho$, and the two latter equalities are equivalent to the equalities $|\sigma|=|\rho|$ and respectively $|\tau|=|\rho|$ in view of Lemma~\ref{LemmaSingProperties}.
\end{proof}

\subsection{Periodic normal swords}
We will need two auxiliary facts similar to well-known properties of ordinary words.

Given a sword $\tau$, a sword $\sigma$ is called $\tau$-\emph{periodic} if $\sigma$ is a power or an $\omega$-power of $\tau$.

\begin{Lemma}\label{Lemma_Conjugates}
Let $\sigma,\tau$ be non-empty normal swords of height at most $h$ and $\tau\sigma = \sigma\tau$. Then there exists a fully normal sword $\pi$ such that both $\tau$ and $\sigma$ are $\pi$-periodic. If, besides that, $\sigma$ and $\tau$ are fully normal, then $h(\sigma) = h(\tau) = h(\pi)$ and $\sigma=\pi^{n_1}$, $\tau=\pi^{n_2}$  for some positive integers $n_1,n_2$.
\end{Lemma}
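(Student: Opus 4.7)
We induct on $h$, with the base case $h=0$ being the classical commutation lemma for free semigroups: two commuting non-empty ordinary words are common powers of a unique primitive word, which is automatically fully normal. For the inductive step, assume the conclusion for normal swords of height strictly less than $h$, and let $\sigma,\tau$ be normal swords of height at most $h$ with $\sigma\tau=\tau\sigma$.

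The overall strategy is a Euclidean-style reduction driven by Lemma~\ref{LemmaSingProperties}. Since $\sigma$ and $\tau$ are both prefixes of the sword $\sigma\tau=\tau\sigma$, part~(2) of that lemma forces one of them to left-divide the other. Assuming $|\sigma|\le|\tau|$ in the polynomial ordering of the paper, we obtain $\tau=\sigma\tau_1$; the sword $\tau_1$ is normal because any $\mathcal{R}/\mathcal{S}$-reducible pattern occurring inside it already appears inside the normal sword $\tau$. Cancellativity from Lemma~\ref{LemmaSingProperties}(1) then gives $\sigma\tau_1=\tau_1\sigma$, and we iterate, swapping the roles of $\sigma$ and the current remainder when required.

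The main obstacle is controlling termination. When $h(\sigma)=h(\tau)$, each extraction strictly decreases the leading coefficient of the length polynomial of the longer sword, and a well-founded descent in the lexicographic order on $(\deg,\text{leading coefficient},\dots)$ brings us, after finitely many swaps, either to the empty sword (so that $\tau$ is a pure power of $\sigma$) or to a commuting pair both of whose heights are strictly below $h$, where the inductive hypothesis applies. The genuinely delicate subcase is $h(\sigma)<h(\tau)$, because then $|\tau|-|\sigma|$ has the same leading term as $|\tau|$ and the naive Euclidean step does not drop the length. Here the plan is to exploit the height representation of $\tau$: iterated extraction would exhibit $\sigma^{k}$ as a prefix of $\tau$ for every $k\ge 1$, and prefix-linearity together with the shape of the height representation of $\tau$ then force $\tau$ to actually start with an $\omega$-power of $\sigma$, i.e.\ $\tau=\sigma^{\omega+q}\tau'$ for some integer $q$ and sword $\tau'$ whose length has strictly smaller degree than $|\tau|$. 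Cancellation yields $\sigma\tau'=\tau'\sigma$ and recursion on the degree of the length polynomial is well-founded.

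Once a common period is produced, choose $\pi$ to be the shortest non-empty sword arising during the reduction. Its minimality, combined with the commutation relations accumulated along the way, prevents $\pi$ from being a proper power or an $\omega$-power of a strictly shorter sword, which gives full normality. When $\sigma$ and $\tau$ are already fully normal the extraction never produces an $\omega$-power, so the exponents $n_1,n_2$ with $\sigma=\pi^{n_1}$ and $\tau=\pi^{n_2}$ are ordinary positive integers and the height equality $h(\sigma)=h(\tau)=h(\pi)$ follows immediately from these identities.
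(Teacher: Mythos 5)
Your opening moves coincide with the paper's: both prefixes of $\sigma\tau=\tau\sigma$, left-divisibility from Lemma~\ref{LemmaSingProperties}(2), cancellation to get a commuting remainder, and a case split on heights. But in the critical mixed-height case your argument has two concrete defects. First, the termination measure is wrong: from $\tau=\sigma^{\omega+q}\tau'$ you get $|\tau|=(\omega+q)|\sigma|+|\tau'|$, and the degree of $(\omega+q)|\sigma|$ is $h(\sigma)+1$; whenever $h(\sigma)<h(\tau)-1$ this is strictly less than $\deg|\tau|$, so $\deg|\tau'|=\deg|\tau|$ and your ``recursion on the degree of the length polynomial'' does not descend. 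Second, and more importantly, you never reconcile the outcome of the recursion with the normality of $\tau$. If the recursive call returns a fully normal $\pi$ with $\sigma=\pi^{n}$ ($n\ge 2$) or $\sigma=\pi^{\omega+m}$, then $\tau=\sigma^{\omega+q}\tau'$ contains a subsword of the form $(x^{n})^{\omega+q}$ or $(x^{\omega+m})^{\omega+q}$, contradicting normality of $\tau$; and if $\tau'$ is an $\omega$-power of $\pi$ you get an adjacent pair $x^{\omega+q}x^{\omega+r}$, again contradicting normality. These contradictions are exactly what is needed to force $\sigma=\pi$ and $\tau'$ a plain power, and they are the heart of the argument: the paper extracts $\omega$-powers of the short word on \emph{both} sides, applies the height induction to the middle piece, and uses precisely such a normality contradiction to conclude that the longer sword is \emph{exactly} an $\omega$-power of the shorter one. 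Without this step your plan produces a pair $(\sigma,\tau')$ of common period $\pi$ but no conclusion about $\tau$ itself, since $\sigma^{\omega+q}\tau'$ need not be $\pi$-periodic as a sword ($\mathcal{R}$ is not factored out in $Sing(A)$).

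The justification of full normality of $\pi$ is also not correct. You argue that minimality prevents $\pi$ from being a proper power or an $\omega$-power of a shorter sword, but that is not the definition of fully normal and does not imply it: for instance $x^{\omega+1}yx^{\omega+2}$ is neither a power nor an $\omega$-power of anything shorter, yet its square contains $x^{\omega+2}x^{\omega+1}$ and so it is not fully normal. The paper obtains full normality differently and more robustly: in the mixed-height case every power $\tau^{m}$ occurs as a prefix of the normal sword $\sigma$, hence every $\tau^{m}$ is normal, which is full normality of $\tau$ by definition. You should replace your minimality argument by an argument of this kind (all powers of the candidate period occur as subswords of a normal sword). A smaller inaccuracy: in the equal-height descent the extraction with equal leading coefficients produces a remainder of height $<h$ paired with a sword still of height $h$, not ``a commuting pair both of whose heights are strictly below $h$''; one must then invoke the mixed-height case inside the same height-$h$ instance, as the paper does, rather than the induction hypothesis on $h$.
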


\begin{proof}
By symmetry, we may assume that $|\sigma|\ge|\tau|$. Observe that since $\sigma\tau=\tau\sigma$, the sword $\tau$ is a prefix of $\sigma$ by item~2 of Lemma~\ref{LemmaSingProperties}. Therefore, there exists a sword $\sigma'$ such that $\sigma = \tau\sigma'$ whence $\tau\sigma'\tau =\sigma\tau =\tau\sigma=\tau^2\sigma'$. Since $\tau^2$ is a prefix of a normal sword, it is normal whence $\tau$ is fully normal. Further, by item~1 of Lemma~\ref{LemmaSingProperties}, we can cancel $\tau$ in the equality $\tau\sigma'\tau = \tau^2\sigma'$, thus getting $\sigma'\tau = \tau\sigma'$.

First consider the case $h(\sigma)>h(\tau)$. Then $h(\sigma')>h(\tau)$ whence $|\sigma'|>|\tau|$. Arguing as in the preceding paragraph, we see that $\tau$ is a prefix of $\sigma'$ and there exists a sword $\sigma''$ of height $h(\sigma)$ such that $\sigma''\tau=\tau\sigma''$, and so on. Therefore, for any positive integer $m$, the sword $\tau^m$ is a prefix of the sword $\sigma$. It is clear this is only possible if $\sigma$ has some $\omega$-power of $\tau$ as a prefix. Therefore, we may assume that $\sigma=\tau^{\omega+k}\sigma_1$ for some integer $k$ and some sword $\sigma_1$.

Suppose that $h(\sigma_1)=h(\sigma)$. Observe that $\tau^{\omega+k}\sigma_1\tau =\sigma\tau =\tau\sigma=\tau^{\omega+k+1}\sigma_1$. Using item~1 of Lemma~\ref{LemmaSingProperties}, we conclude that $\sigma_1\tau=\tau\sigma_1$, and we can repeat the same steps as before to obtain that $\sigma_1=\tau^{\omega+\ell}\sigma_2$ for some integer $\ell$ and some sword $\sigma_1$.  Then, however, the sword  $\sigma=\tau^{\omega+k}\tau^{\omega+\ell}\sigma_2$ would not be normal, a contradiction.

Hence $h(\sigma_1)<h(\sigma)$. If $\sigma_1$ is not empty, we can apply the induction hypothesis to the swords $\sigma_1$ and $\tau$ whose height is strictly less than $h$. There exists a fully normal sword $\pi$ such that both $\sigma_1$ and $\tau$ are $\pi$-periodic but then $\sigma = \tau^{\omega+k}\sigma_1$ could not be normal, a contradiction again. Thus, $\sigma_1$ is empty, whence $\sigma=\tau^{\omega+k}$ is an $\omega$-power of the fully normal sword $\tau$ and the first statement of the lemma holds true.

Assume now that $h(\sigma)=h(\tau)$. Then $|\sigma|=\ell_1\omega^h+g_1(w)$, $|\tau|=\ell_2\omega^h+g_2(\omega)$, where $\deg(g_i)<h$ for $i=1,2$. We proceed by induction on $\max\{\ell_1,\ell_2\}$.

If $\ell_1=\ell_2$ (in particular, if $\max\{\ell_1,\ell_2\}=1$), the sword $\sigma'$ such that $\sigma=\tau\sigma'=\sigma'\tau$ has height less than $h$, and this leads to the case just considered. We then obtain that there is a fully normal sword $\pi$ such that $\tau=\pi^{\omega+k}$ and $\sigma'=\pi^m$ for some integers $k,m$, and thus, $\sigma=\pi^{\omega+k+m}$. If $\ell_1\ne\ell_2$, we apply the induction hypothesis to the pair $\sigma',\tau$ to obtain the required property.

The second statement of the lemma trivially follows from the definition of a fully normal sword.
\end{proof}

\begin{Lemma}\label{Lemma_Periodic_Square}
Let $x$ and $y$ be fully normal swords. Suppose that $|x|\ge|y|$ and $x^2$ is a prefix \textup[suffix\textup] of some $y$-periodic sword. Then there exists a fully normal sword $z$ such that $x$ and $y$ are powers of $z$. If, besides that, some $\omega$-powers of $x$ and $y$ are prefixes \textup[suffixes\textup] of a normal sword, then $x = y$.
\end{Lemma}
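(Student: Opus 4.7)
My plan is to adapt the classical Fine--Wilf argument: a factor long enough to sit inside a $y$-periodic sword inherits period $|y|$; combined with its own trivial period $|x|$ this forces a commutation inside $x$, which Lemma~\ref{Lemma_Conjugates} then converts into a common fully normal root. I would prove the lemma by induction on the height $h$, so that Lemma~\ref{Lemma_Conjugates} and the normal-form propositions are available at all heights $\le h$; at height $0$ the statement is a well-known fact about ordinary words.

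To extract a commutation I would fix a representative of the $y$-periodic sword $W$ (either $y^n$ with $n\ge 2$, or $y^{\omega+k}$) together with a literal occurrence of $x^2$ in it, writing $W = u\cdot x\cdot x\cdot v$. The key property is that translating $W$ by one period is an $\mathcal{S}$-internal symmetry: in the $\omega$-power case $yW=Wy$ directly from the defining $\mathcal{S}$-relations, and in the power case one has an analogous internal rotation inside $y^n$. Because $|x|\ge|y|$, the length-$|x|$ subword that starts one period past the first $x$ is, on the one hand, equal to $x$ (by the shift property), and on the other hand lies entirely inside $x\cdot x$, splitting as a suffix of the first $x$ of length $|x|-|y|$ followed by a prefix of the second $x$ of length $|y|$. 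Writing $x = \alpha\beta$ with $|\alpha|=|y|$, this comparison yields $x = \beta\alpha$, hence $\alpha\beta = \beta\alpha$.

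Since $x$ is fully normal, its initial and terminal segments $\alpha$ and $\beta$ (of heights at most $h$) are fully normal, so Lemma~\ref{Lemma_Conjugates} applied to the commuting pair produces a fully normal $z$ and positive integers $a,b$ with $\alpha = z^a$, $\beta=z^b$, whence $x = z^{a+b}$. To see that $y$ is also a power of $z$, I would argue that $\alpha$, read off $W$ at the offset of the first $x$, is a rotation of $y$; the $\mathcal{S}$-rolling relation then identifies $\alpha$ with $y$ as swords (passing, in the power case, to the $\omega$-power envelope $y^{\omega+k}$ of $W$ to make the rolling legal). Full normality of $y$ gives $y = \alpha = z^a$, proving the main statement. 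For the concluding ``in particular'', assume $x^{\omega+k_1}$ and $y^{\omega+k_2}$ both occur as subswords of a normal sword $\sigma$ with $|x|\ge|y|$; producing an occurrence of $x^2$ inside the $y$-periodic zone $y^{\omega+k_2}$ (by interacting it with the adjoining $x$-periodic zone $x^{\omega+k_1}$ in $\sigma$) allows the main statement to apply, giving $x=z^a$, $y=z^b$. A representative of $\sigma$ would then contain $(z^a)^{\omega+k_1}$, which is $\mathcal{R}/\mathcal{S}$-reducible to $z^{\omega+ak_1}$ unless $a=1$; likewise $b=1$, and so $x=y=z$.

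I expect the principal obstacle to be the position bookkeeping in the second step: ``one period to the right of the first $x$'' is a polynomial offset in $Z[\omega]$, and the only legal manipulations are the defining relations of $\mathcal{S}$ (wind-off, wind-on, rolling). Turning the informal comparison of two length-$|x|$ subwords into a genuine sword equation requires producing a single representative of $W$ in which both subwords appear literally as substrings, which in turn requires careful use of wind-off/wind-on to expose enough copies of $y$ on either side of the occurrence $x\cdot x$. The power case $W=y^n$ is the most delicate, as there one has fewer degrees of freedom than in the $\omega$-power case and may need to lift to a larger $\omega$-enveloping sword before the rolling relation can be applied.
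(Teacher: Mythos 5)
Your overall strategy (extract a commutation from the occurrence shifted by one period and feed it to Lemma~\ref{Lemma_Conjugates}) is the same as the paper's, but the step that transfers the conclusion from $x$ back to $y$ rests on a false principle. You split $x=\alpha\beta$ with $|\alpha|=|y|$, derive $\alpha\beta=\beta\alpha$, obtain $\alpha=z^{a}$, $\beta=z^{b}$, and then claim that ``the $\mathcal{S}$-rolling relation identifies $\alpha$ with $y$ as swords'' because $\alpha$ is a rotation of $y$. The rolling relation $x(yx)^{\omega+q}\,\mathcal{S}\,(xy)^{\omega+q}x$ only moves a circle past a matching prefix or suffix; it never equates a sword with a rotation of itself. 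In particular, the $\mathcal{S}$-class of an ordinary word is a singleton, so a conjugate $y_2y_1$ of $y=y_1y_2$ is in general a \emph{different} sword: take $y=ab$, $x=ba$, with $x^2=baba$ a subword of the $y$-periodic word $(ab)^3$; there your $\alpha$ is $ba$, which no amount of rolling turns into $ab$, and indeed $ab$ and $ba$ admit no common root. So your route only yields that $y$ is a power of a \emph{conjugate} of $z$, which is strictly weaker than the claim. The paper avoids this by splitting the period rather than $x$: it writes the occurrence in the aligned form $x=(y_1y_2)^{n}y_1=y_2(y_1y_2)^{m}y_1$ with $y=y_1y_2$, so that $x$ has the two prefixes $y_1y_2$ and $y_2y_1$ of equal length; linear ordering of prefixes (Lemma~\ref{LemmaSingProperties}) plus cancellation gives $y_1y_2=y_2y_1$, and Lemma~\ref{Lemma_Conjugates} then roots $y_1$ and $y_2$, so that $y$ itself, not a conjugate, is a power of $z$, and $x$ is too. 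Securing that alignment is exactly the ``bookkeeping obstacle'' you flag at the end, but it is not bookkeeping: it is the missing idea, and without it the transfer step fails.

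Two further gaps, one repairable and one structural. First, a prefix or suffix of a fully normal sword need not be fully normal (for instance $(ab)^{\omega+1}$ is a prefix of the fully normal sword $(ab)^{\omega+1}c$, yet its square is $\mathcal{R}/\mathcal{S}$-reducible), so you cannot invoke the ``fully normal'' half of Lemma~\ref{Lemma_Conjugates} for $\alpha,\beta$ directly; you must use its first half (which needs only normality, and subswords of normal swords are normal) and then exclude $\omega$-power components of the root by full normality of $x=\alpha\beta$. Second, you never treat the case $h(y)<h(x)$, where $x^2$ sits inside an $\omega$-power of $y$ and your split of $x$ at length $|y|$ produces a commuting pair of different heights; Lemma~\ref{Lemma_Conjugates} then forces an $\omega$-power, not a power, and one must convert this into a contradiction with full normality of $x$, which is precisely the paper's first step before it works in equal height. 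On the positive side, your treatment of the ``in particular'' clause (forcing $a=b=1$ because $(z^{a})^{\omega+k}$ with $a\geq 2$ is reducible inside a normal sword) is sound and more explicit than the paper's one-line dismissal.
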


\begin{proof}
Since $x^2$ is a prefix of a power or an $\omega$-power of $y$ and $|x|\ge|y|$, a power or an $\omega$-power of $y$  should be a prefix of $x$. In the latter case there are swords $y_1,y_2$ and a prefix $y_3$ of $y$ such that $y=y_1y_2$ and $x=(y_1y_2)^{\omega+r_1}y_1=y_2(y_1y_2)^{\omega+r_2}y_3$ for some $r_1,r_2\in\integers$ whence
\[
x^2=(y_1y_2)^{\omega +r_1}y_1y_2(y_1y_2)^{\omega+r_2}y_3=y_1(y_2y_1)^{\omega +r_1}(y_2y_1)^{\omega +r_2}y_1y_3.
\]
We see that $x$ is not fully normal, a contradiction.

If a power of $y$ but no $\omega$-power of $y$ is a prefix of $x$, then there exist swords $y_1,y_2$ and a prefix $y_3$ of $y$ such that $y_1y_2=y$  and $x=(y_1y_2)^{n_1}y_1=y_2(y_1y_2)^{n_2}y_3$ for some $n_1,n_2\ge1$. Since both $y_1y_2$ and $y_2y_1$ are prefixes of $x$ and have the same length, we conclude that $y_1y_2=y_2y_1$. Since $y$ is fully normal, by Lemma~\ref{Lemma_Conjugates} we obtain that $y_1 = z^{k_1}$, $y_2 =z^{k_2}$ for some $k_1,k_2\ge1$ and some fully normal sword $z$. Hence $x$ and $y$ are powers of $z$.

The second statement of the lemma immediately follows from the fact that, because of the rule~\eqref{eq:multiplication1}, the only power of $z$ that can occur as a factor in a normal sword is $z$ itself. Hence, $x=z$ and $y=z$.
\end{proof}

\subsection{Normalization}
Our algorithm that constructs a normal form of an arbitrary sword will repeatedly invoke two procedures.
Recall that every sword of height $h$ can be represented as $\pi_0\rho_1^{\omega+q_1}\pi_1\cdots\rho_n^{\omega+q_n}\pi_n$, where each of the swords $\pi_0,\pi_1,\dots,\pi_n,\rho_1,\dots,\rho_n$ has height less than $h$. Therefore it suffices to exhibit a procedure that produces a normal form for a given sword being an $\omega$-power of a normal swords and a procedure that produces a normal form for the product of two given normal swords.

We start with the latter procedure. First of all, observe that the product of two normal swords may indeed fail to be normal. For example, if $\sigma_1=(xz^{\omega+2})^{\omega}$ and $\sigma_2 = z^{\omega-5}$, then the product $\sigma_1\sigma_2=(xz^{\omega+2})^{\omega}z^{\omega-5}$ is not normal because it is equal (as a sword) to
$(xz^{\omega+2})^{\omega-1}xz^{\omega+2}z^{\omega-5}$. One can obtain a normal form of $(xz^{\omega+2})^{\omega-1}xz^{\omega+2}z^{\omega-5}$ by applying to it just one $\mathcal{R}/\mathcal{S}$-transition of the form \eqref{eq:addition}, thus producing $(xz^{\omega+2})^{\omega-1}xz^{\omega-3}$.

Actually, the situation demonstrated by the above example is generic. By the definition of a normal sword, it is clear that the product of two normal swords is not normal if and only if it has a factor of the form $y^{\omega+r}y^{\omega+q}$. Thus, the fact that $\sigma_1$ and $\sigma_2$ are normal while $\sigma_1\sigma_2$ is not is equivalent to the existence of a sword $y$ such that $\sigma_1$ has a suffix and $\sigma_2$ has a prefix that are $\omega$-powers of $y$. Obviously, if such a sword $y$ exists, it should be fully normal and by Lemma~\ref{Lemma_Periodic_Square} it is unique. We refer to $y$ as to the \emph{overlap} between $\sigma_1$ and $\sigma_2$.

\begin{Lemma}\label{Lemma_Why_Is_Not_Normal}
Let $\sigma_1,\sigma_2$ be two normal swords of height no more than $h$. There exists an algorithm that decides whether the product $\sigma_1\sigma_2$ is in normal form. If the product is not normal, the algorithm finds the overlap between $\sigma_1$ and $\sigma_2$ and constructs a normal form for the product.
\end{Lemma}

\begin{proof}
First, we describe an algorithm that, given two normal swords $\sigma_1,\sigma_2$ of height $h$, decides if they have an overlap of height $h-1$, and in the case where the answer is positive, finds the overlap and constructs a normal form for $\sigma_1\sigma_2$. Since the algorithm will be repeatedly invoked in the rest of the proof, it is convenient to give it a name. So, let us call this algorithm \textsf{TO} (from ``tall overlap'').

We start with representing the given swords as
\begin{equation}
\label{eq:present_for_product}
\sigma_1=\sigma'_1\rho_1^{\omega+q_1}\pi_1\ \text{ and }\ \sigma_2=\pi_2\rho_2^{\omega+q_2}\sigma'_2,
\end{equation}
where $h(\rho_1)=h(\rho_2)=h-1$ and $h(\pi_1), h(\pi_2)\le h-1$. If $\sigma_1$ has a suffix that is an $\omega$-power of some sword of height $h-1$, then the suffix is of height $h$, and hence, it has to involve some $\omega$-power of $\rho_1$ as a factor. Clearly, this is only possible if $\rho_1=\pi_1\rho'_1$ for some sword $\rho_1'$, and we can decide whether or not the latter equality holds true by applying the algorithm of Proposition~\ref{Prop_LongestCommonPrefix} to the swords $\pi_1$ and $\rho_1$ to find their longest common prefix and then deciding whether or not this prefix is equal to $\pi_1$. If the answer is positive, we can rewrite the sword $\sigma_1$ (by ``rolling'' $(\pi_1\rho'_1)^{\omega+q_1}$ to the right) as
\[
\sigma_1=\sigma'_1\pi_1(\rho'_1\pi_1)^{\omega+q_1}.
\]
Similarly, if $\sigma_2$ has a prefix that is an $\omega$-power of some sword of height $h-1$, then the prefix is of height $h$ and has some $\omega$-power of $\rho_2$ as a factor. This is only possible if $\rho_2=\rho'_2\pi_2$ for some sword $\rho_2'$, which can be verified by applying the ``dual'' of the algorithm of Proposition~\ref{Prop_LongestCommonPrefix}. If the answer is positive, we can rewrite the sword $\sigma_2$ (by ``rolling'' $(\rho'_2\pi_2)^{\omega+q_2}$ to the left) as
\[
\sigma_2=(\pi_2\rho'_2)^{\omega+q_2}\pi_2\sigma'_2.
\]
Now it remains to verify whether or not the swords $\rho'_1\pi_1$ and $\pi_2\rho'_2$ of height $h-1$ are equal. If the answer is negative, $\sigma_1$ and $\sigma_2$ have no overlap of height $h-1$. Otherwise, the sword $y=\rho'_1\pi_1=\pi_2\rho'_2$ is the overlap of height $h-1$ between $\sigma_1$ and $\sigma_2$ and $\sigma_1\sigma_2=\sigma'_1\pi_1y^{\omega+q_1}y^{\omega+q_2}\pi_2\sigma'_2$. Applying an $\mathcal{R}/\mathcal{S}$-transition of the form \eqref{eq:addition}, we reduce it to the sword $\tau=\sigma'_1\pi_1y^{\omega+q_1+q_2}\pi_2\sigma'_2$. We are going to prove that $\tau$ is normal, and thus, it is a normal form for the product $\sigma_1\sigma_2$. Arguing by contradiction, assume that $\tau$ is not normal. Then there should be the overlap $z$, say, between either $\sigma'_1\pi_1$ and $y^{\omega+q_1+q_2}\pi_2\sigma'_2$ or $\sigma'_1\pi_1y^{\omega+q_1+q_2}$ and $\pi_2\sigma'_2$. Since the height of $\tau$ is $h$, the height of $z$ is at most $h-1$ whence $z$ contains no $\omega$-power of $y$ as a prefix nor as a suffix. Then Lemma~\ref{Lemma_Periodic_Square} easily implies that $z=y$, but this leads to a contradiction: if $\sigma'_1\pi_1$ has an $\omega$-power of $y$ as a suffix, the sword $\sigma_1=\sigma'_1\pi_1y^{\omega+q_1}$ could not be normal, and if $\pi_2\sigma'_2$ has an $\omega$-power of $y$ as a prefix, the same conclusion applies to the sword $\sigma_2=y^{\omega+q_2}\pi_2\sigma'_2$.

Now assume that the algorithm \textsf{TO} has revealed that the swords $\sigma_1$ and $\sigma_2$ have no overlap of height $h-1$. Then we have to check whether they have an overlap of lesser height. For this, we again represent the swords in the form~\eqref{eq:present_for_product} and apply the algorithm of Lemma~\ref{Lemma_Why_Is_Not_Normal} to the swords $\rho_1^2\pi_1$ and $\pi_2\rho_2^2$, which are both of height less than $h$. If there is no overlap between $\rho_1^2\pi_1$ and $\pi_2\rho_2^2$, then there also is no overlap of height less than $h-1$ between $\sigma_1$ and $\sigma_2$ and the product $\sigma_1\sigma_2$ is in normal form. Otherwise, the algorithm finds the overlap between $\rho_1^2\pi_1$ and $\pi_2\rho_2^2$. Denoting the overlap by $y$, we can represent these swords as
\[
\rho_1^2\pi_1=\sigma''_1y^{\omega+r_1} \text{ and } \pi_2\rho_2^2=y^{\omega+r_2}\sigma''_2
\]
for some $r_1,r_2\in\integers$. Consider the sword $\theta=\sigma'_1\rho_1^{\omega+q_1-2}\sigma''_1y^{\omega+r_1+r_2}\sigma''_2\rho_2^{\omega+q_2-2}\sigma'_2$ obtained from $\sigma_1\sigma_2$ by an $\mathcal{R}/\mathcal{S}$-transition of the form \eqref{eq:addition}.  Clearly, to check whether $\theta$ is normal amounts to examining overlaps between the swords $\sigma'_1\rho_1^{\omega+q_1-2}\sigma''_1$ and $y^{\omega+r_1+r_2}\sigma''_2\rho_2^{\omega+q_2-2}\sigma'_2$ or between the swords $\sigma'_1\rho_1^{\omega+q_1-2}\sigma''_1y^{\omega+r_1+r_2}$ and $\sigma''_2\rho_2^{\omega+q_2-2}\sigma'_2$. In contrast to the situation in the previous paragraph, here the overlaps may exist, and it appears to make sense to illustrate this phenomenon by an example.

Thus, consider the normal swords $\tau_1=(x^{\omega}y)^{\omega+1}x^{\omega}$ and $\tau_2=(x^{\omega}y)^{\omega-2}$ of height 2. They have no overlap of height 1, but do have the overlap of height 0, namely, $x$: this becomes obvious if we rewrite $\tau_2$ as $x^{\omega}y(x^{\omega}y)^{\omega-3}$. Applying an $\mathcal{R}/\mathcal{S}$-transition of the form \eqref{eq:addition} to the product $\tau_1\tau_2$, we obtain the sword $(x^{\omega}y)^{\omega+1}x^{\omega}y(x^{\omega}y)^{\omega-3}$, and we see that the swords $(x^{\omega}y)^{\omega+1}$ and $x^{\omega}y(x^{\omega}y)^{\omega-3}=(x^{\omega}y)^{\omega-2}$ have the overlap $x^{\omega}y$, this time of height 1. Resolving the overlap with yet another transition of the form~\eqref{eq:addition}, we finally obtain the normal sword $(x^{\omega}y)^{\omega-1}$ as a normal form for $\tau_1\tau_2$.

Observe that the above example illustrates not only the difficulty we encounter but also a way to overcome it: since the new overlap has height $h(\tau_1)-1$, it can be found by the algorithm \textsf{TO}. Mutatis mutandis, this idea works also in the general case.

We return to the proof. Recall that we have to decide if there are overlaps between $\sigma'_1\rho_1^{\omega+q_1-2}\sigma''_1$ and $y^{\omega+r_1+r_2}\sigma''_2\rho_2^{\omega+q_2-2}\sigma'_2$ or between $\sigma'_1\rho_1^{\omega+q_1-2}\sigma''_1y^{\omega+r_1+r_2}$ and $\sigma''_2\rho_2^{\omega+q_2-2}\sigma'_2$. First, we apply \textsf{TO} to decide if there are overlaps of height $h-1$, and in the case where the answer is positive, to construct a normal form for the sword $\theta$, which will be also a normal form for $\sigma_1\sigma_2$. Suppose that \textsf{TO} has found no overlaps of height $h-1$. Then we apply the algorithm of Lemma~\ref{Lemma_Why_Is_Not_Normal} to the swords $\rho_1^2\sigma''_1$ and $y^{\omega+r_1+r_2}\sigma''_2\rho_2^2$ or to the swords $\rho_1^2\sigma''_1y^{\omega+r_1+r_2}$ and $\sigma''_2\rho_2^2$, which are all of height less than $h$. If no overlaps are found, then there are no overlaps of height less than $h-1$ between $\sigma'_1\rho_1^{\omega+q_1-2}\sigma''_1$ and $y^{\omega+r_1+r_2}\sigma''_2\rho_2^{\omega+q_2-2}\sigma'_2$ nor between $\sigma'_1\rho_1^{\omega+q_1-2}\sigma''_1y^{\omega+r_1+r_2}$ and $\sigma''_2\rho_2^{\omega+q_2-2}\sigma'_2$, and the sword $\theta$ is normal. Suppose that the algorithm has found the overlap $z$, say, between $\rho_1^2\sigma''_1$ and $y^{\omega+r_1+r_2}\sigma''_2\rho_2^2$ or between $\rho_1^2\sigma''_1y^{\omega+p_1+p_2}$ and $\sigma''_2\rho_2^2$. If $z$ has no $\omega$-power of $y$ as a prefix nor as a suffix, then Lemma~\ref{Lemma_Periodic_Square} implies that $z=y$, and this leads to a contradiction: if $\rho_1^2\sigma''_1$ has an $\omega$-power of $y$ as a suffix, the sword $\sigma_1=\sigma'_1\rho_1^{\omega+q_1-4}\rho_1^2\sigma''_1y^{\omega+r_1} $ could not be normal, and if $\sigma''_2\rho_2^2$ has an $\omega$-power of $y$ as a prefix, the same conclusion applies to the sword $\sigma_2=y^{\omega+r_2}\sigma''_2\rho_2^2\rho_2^{\omega+q_2-4}\sigma'_2\rule{0pt}{11pt}$. Thus, $z$ has no $\omega$-power of $y$ as a prefix or as a suffix whence $h(y)<h(z)<h-1$. Now we apply the same sequence of arguments to $z$, etc. Clearly,  each step of the described process deals with some sword $\theta$ obtained from $\sigma_1\sigma_2$ by a sequence of $\mathcal{R}/\mathcal{S}$-transitions and leads to exactly one of the three following results:
\begin{enumerate}
\item \textsf{TO} finds the overlap of height $h-1$, resolves it in $\theta$, and returns a normal form for $\sigma_1\sigma_2$.
\item \textsf{TO} finds no overlap of height $h-1$, and the algorithm of Lemma~\ref{Lemma_Why_Is_Not_Normal} applied to suitable words of height less than $h$ finds no overlap of height less than $h-1$, and therefore, $\theta$ constitutes a normal form for $\sigma_1\sigma_2$.
\item \textsf{TO} finds no overlap of height $h-1$, but the algorithm of Lemma~\ref{Lemma_Why_Is_Not_Normal} applied to suitable words of height less than $h$ finds the overlap whose height  is less than $h-1$ but greater than the height of the overlap found in the previous step. In this case we resolve the overlap in $\theta$ and pass to the next step.
\end{enumerate}
It is clear that the number of steps of our process does not exceed $h$, and therefore, it will eventually return
a normal form for $\sigma_1\sigma_2$.

It remains to analyze the situation where one of the swords $\sigma_1$ and $\sigma_1$ is of height less than $h$ while the other has height $h$. (In the case where both $h(\sigma_1)<h$ and $h(\sigma_2)<h$, the induction hypothesis applies immediately.) The argument is similar to that described above but simpler because there is no need to invoke the algorithm \textsf{TO}---one of the swords is of height less than $h$, no overlap of height $h-1$ is possible. By symmetry, we may suppose that $h(\sigma_1)<h$ and $h(\sigma_2)=h$. Then we can represent $\sigma_2$ as $\sigma_2=\pi_2\rho_2^{\omega+q_2}\sigma'_2$, where $h(\rho_2)=h-1$, $h(\pi_2)\leq h-1$. We apply the algorithm of Lemma~\ref{Lemma_Why_Is_Not_Normal} to the swords $\sigma_1$ and $\pi_2\rho_2^2$, which are both of height less than $h$. If there is no overlap between $\sigma_1$ and $\pi_2\rho_2^2$, then there also is no overlap between $\sigma_1$ and $\sigma_2$ and the product $\sigma_1\sigma_2$ is in normal form. Otherwise, the algorithm finds the overlap between $\sigma_1$ and $\pi_2\rho_2^2$. Denoting the overlap by $y$, we can represent these swords as
\[
\sigma_1=\sigma''_1y^{\omega+r_1} \text{ and } \pi_2\rho_2^2=y^{\omega+r_2}\sigma''_2
\]
for some $r_1,r_2\in\integers$. Consider the sword $\theta=\sigma''_1y^{\omega+r_1+r_2}\sigma''_2\rho_2^{\omega+q_2-2}\sigma'_2$ obtained from $\sigma_1\sigma_2$ by an $\mathcal{R}/\mathcal{S}$-transition of the form \eqref{eq:addition}. Clearly, to check whether $\theta$ is normal amounts to examining the overlaps between the swords $\sigma''_1$ and $y^{\omega+r_1+r_2}\sigma''_2\rho_2^{\omega+q_2-2}\sigma'_2$ and between the swords $\sigma''_1y^{\omega+r_1+r_2}$ and $\sigma''_2\rho_2^{\omega+q_2-2}\sigma'_2$. For this, we apply the algorithm of Lemma~\ref{Lemma_Why_Is_Not_Normal} to two pairs of swords: $\sigma''_1$ and $y^{\omega+r_1+r_2}\sigma''_2\rho_2^2$ and respectively $\sigma''_1y^{\omega+r_1+r_2}$ and $\sigma''_2\rho_2^2$, which swords are all of height less than $h$. If no overlaps have been discovered, the sword $\theta$ is normal and thus constitutes a normal form for the product $\sigma_1\sigma_2$. If the overlap $z$, say, has been found, then the same argument as above shows that $h(z)>h(y)$. We apply the same arguments to $z$, etc. The described process will eventually stop since the height of overlaps increases on each step but cannot reach $h-1$.
\end{proof}

Now we study how to construct a normal form for the $\omega$-power of a normal sword.
\begin{Lemma}\label{Lemma_Big_Rings}
Let $\rho$ be a normal sword of height $h-1$. There is an algorithm that reduces the sword $\rho^{\omega+q}$ either to a normal sword of height less than $h$ or  to a sword of the form $\alpha_1\beta^{\omega+t}\alpha_2$ in which $h(\alpha_1),h(\alpha_2)\leq h-1$, $h(\beta)=h-1$, and the sword $\beta^{\omega+t}$ is normal.
\end{Lemma}

\begin{proof}
Assume that $\rho^{\omega+q}$ is not normal. This means that an $\mathcal{R}/\mathcal{S}$-transition can be applied to the sword. Depending on the form of the transition, one of the three following cases occurs:
\begin{enumerate}
      \item[1)] $\rho = x^{\omega+m_1}x'x^{\omega+m_2}$ for some $m_1,m_2\in\integers$ and some non-empty $x'$ (the sword $x'$ cannot be empty since $\rho$ is normal);
      \item[2)] $\rho = x^n$ for some integer $n\ge 2$;
      \item[3)] $\rho = x^{\omega+m}$ for some integer $m$.
\end{enumerate}
We apply the algorithm of Lemma~\ref{Lemma_Algorythm_Why_Not_Normal} to the normal sword $\rho$ of height $h-1$ to check whether one of the cases 2) or 3) occurs, and if it does, to find a corresponding fully normal sword $x$. We may additionally assume that $x\ne y^p$ for any integer $p\ge 2$: in case 2) such an $x$ is the sword of minimum length returned by the algorithm  while in case 3) this holds automatically since $\rho$ is normal. Then $\rho^{\omega +q}$ can be reduced to $x^{\omega+nq}$ in case 2) or to $x^{\omega+mq}$ in case 3) and either of these two $\omega$-powers is easily seen to be normal.

In the remaining case 1), the sword $\rho$ must have a proper overlap with itself. We can check if this indeed happens by invoking the algorithm of Lemma~\ref{Lemma_Why_Is_Not_Normal} since the height of $\rho$ is less than $h$. Moreover, if the overlap exists, it is unique and fully  normal, and the algorithm finds it. Thus, assume that $\rho=x_0^{\omega+m_1}x_0'x_0^{\omega+m_2}$ for some fully normal sword $x_0$ and some $m_1,m_2\in\integers$. Then the sword $\rho^{\omega+q} = (x_0^{\omega+m_1}x_0'x_0^{\omega+m_2})^{\omega+q}$  can be reduced to the sword $x_0^{\omega+m_1} x_0'(x_0^{\omega+m_1+m_2}x_0')^{\omega+q-1}x_0^{\omega+m_2}$. Consider the sword  $\rho_1=x_0^{\omega+m_1+m_2}x_0'$. It is normal because any $\mathcal{R}/\mathcal{S}$-transition applicable to $\rho_1$ could have been applied also to $\rho$ while $\rho$ is normal. If $\rho_1^{\omega+q-1}$ is normal, we are done as we can put $\alpha_1=x_0^{\omega+m_1}x_0'$, $\alpha_2=x_0^{\omega+m_2}$, and $\beta=\rho_1$.

Suppose that $\rho_1^{\omega+q-1}$ is not normal. Then one of the above cases 1)--3) takes place for $\rho_1$.
We already have seen how to handle case 2): we can reduce $\rho_1^{\omega+q-1}$ to its normal form being an $\omega$-power of a fully normal word of height less than $h$. Substituting this normal form for $\rho_1^{\omega+q-1}$ in the sword $x_0^{\omega+m_1}x_0'\rho_1^{\omega+q-1}x_0^{\omega+m_2}$ produces a sword of the desired form $\alpha_1\beta^{\omega+t}\alpha_2$.

In case 3) there is a fully normal sword $x_1$ of height less than $h-1$ such that $\rho_1=x_1^{\omega+\ell}$ for some integer $\ell$. Then $x_0^{\omega+m_1}x_0'\rho_1^{\omega+q-1}x_0^{\omega+m_2}=
x_0^{\omega+m_1}x_0'(x_1^{\omega+\ell})^{\omega+q-1}x_0^{\omega+m_2}$, and we can apply an $\mathcal{R}/\mathcal{S}$-reduction of the form \eqref{eq:multiplication2} to obtain the sword $x_0^{\omega+m_1}x_0'x_1^{\omega+\ell(q-1)}x_0^{\omega+m_2}$ of height less than $h$. We can construct a normal form using of the latter sword using Proposition~\ref{Prop_Normalization}.

Finally, consider the situation where $\rho_1$ falls in case 1). Assume that $\rho_1=x_1^{\omega+\ell_1}x_1'x_1^{\omega +\ell_2}$ for some $\ell_1,\ell_2\in\integers$ and some non-empty $x_1'$. If $ h(x_0)=h(x_1)$, then Lemma~\ref{Lemma_Periodic_Square} implies $x_0=x_1$, and the sword $x_0'$ has some $\omega$-power of $x_0$ as a suffix. This would mean that the sword $\rho=x_0^{\omega+m_1}x_0'x_0^{\omega+m_2}$ is not normal, a contradiction. If $h(x_0)>h(x_1)$, then $x_0$ has a prefix of the form $x_1^{\omega+r_1}$ while  $x_0'$ has a suffix of the form $x_1^{\omega+r_2}$, and this again contradicts the assumption that $\rho$ is normal. Thus, we conclude that $h(x_0)<h(x_1)$.

We proceed in the same way by reducing the sword $\rho_1^{\omega+q-1}=(x_1^{\omega+\ell_1}x_1'x_1^{\omega+\ell_2})^{\omega+q-1}$ to the sword $x_1^{\omega+\ell_1}x_1'(x_1^{\omega+\ell_1+\ell_2}x_1')^{\omega+q-2}x_1^{\omega+\ell_2}$ and letting
$\rho_2=x_1^{\omega+\ell_1+\ell_2}x_1'$. The only situation where we cannot immediately normalize the sword $\rho_2^{\omega+q-2}$ is that where $\rho_2=x_2^{\omega+k_1}x_2'x_2^{\omega +k_2}$ for some $k_1,k_2\in\integers$ and some non-empty $x_2'$. In this situation, as the argument from the preceding paragraph shows, we must have $h(x_1)<h(x_2)$. We then repeat the procedure, if necessary, until it eventually stops at some sword $\rho_s$ such that the sword $\rho_s^{\omega+q-s}$ can be reduced to a normal sword of the form $\beta^{\omega+t}$  with $h(\beta)<h$. (Since the height of the overlaps $x_0,x_1,x_2,\dots$ increases on each step, the number of steps is upper bounded by $h$.) Then the original $\omega$-power $\rho^{\omega+q}$ is reduced to the sword $\alpha_1\beta^{\omega+t}\alpha_2$, where $\alpha_1=x_0^{\omega+m_1}x_0'x_1^{\omega+\ell_1}x_1'\cdots x_s^{\omega+r_1}x_s'$ and $\alpha_2=x_s^{\omega+r_2}\cdots x_1^{\omega+\ell_2}x_0^{\omega+m_2}$.
\end{proof}

Now we are in a position to prove Proposition~\ref{Prop_Normalization}. Recall that is claims the existence of an algorithm that, given a sword $\sigma$ of height $h$, returns a normal form of $\sigma$.

\begin{proof}[of Proposition~\ref{Prop_Normalization}]
Let $\sigma=\pi_0\rho_1^{\omega+q_1}\pi_1\cdots\rho_n^{\omega+q_n}\pi_n$ be a height representation of $\sigma$. By the induction hypothesis, we assume that all the swords $\rho_1,\dots,\rho_n$ and $\pi_0,\pi_1,\dots,\pi_n$ are normal.

By Lemma~\ref{Lemma_Big_Rings}, each sword $\rho_i^{\omega+q_i}$ can be effectively reduced to a sword of the form $\alpha_{i1}\beta_i^{\omega+r_i}\alpha_{i2}$ where the sword $\beta_i^{\omega+r_i}$ is normal of height $h$ while the swords $\alpha_{i1},\alpha_{i2}$ have height at most $h-1$ and again may be assumed to be normal by the induction hypothesis. Then $\sigma$ reduces to the following product of normal factors:
\[
\pi_0\alpha_{11}\beta_1^{\omega+r_1}\alpha_{12}\pi_1\dots\alpha_{n1}\beta_i^{\omega+r_i}\alpha_{n2}\pi_n.
\]
The algorithm of Lemma~\ref{Lemma_Why_Is_Not_Normal} allows us to construct a normal form for the product of two normal swords. Clearly, applying this algorithm several times, we can construct a normal form for the product of any finite number of normal swords, and hence, a normal form for $\sigma$.
\end{proof}

\subsection{Longest common prefix}
Here we prove Proposition~\ref{Prop_LongestCommonPrefix}. Recall that it claims the existence of an algorithm that, given two normal swords $\sigma$ and $\delta$, say, both of height at most $h$, returns their longest common prefix, which we denote by $LCP(\sigma,\delta)$.

\begin{proof}[of Proposition~\ref{Prop_LongestCommonPrefix}]
We fix a height representation of the sword $\sigma$ with the minimum number of factors of height $h$ and an analogous height representation of the sword $\delta$. We are going to induct on the total number $t$ of factors of height $h$ in these two representations. If $t=0$, then $h(\sigma),h(\delta)<h$ and the hypothesis of external induction (on $h$) applies. Consider the situation where one of the swords $\sigma$ and $\delta$ has height less than $h$ while the other has height $h$. For certainty, suppose that $h(\delta)<h(\sigma)=h$ and let $\pi_0\rho_1^{\omega+q_1}\cdots\rho_n^{\omega+q_n}\pi_n$ be the chosen height representation of the sword $\sigma$. Then $LCP(\sigma,\delta)=LCP(\pi_0\rho_1^k,\delta)$ where $k$ is any integer large enough to ensure that $|\pi_0\rho_1^k|>|\delta|$. Since $h(\pi_0\rho_1^k)<h$, we can construct $LCP(\pi_0\rho_1^k,\delta)$ using the external induction hypothesis. Therefore, we may assume that both $\sigma$ and $\delta$ have height $h$.

First consider a special case where $\sigma=\pi\rho^{\omega+q}$ and $\delta=\alpha\beta^{\omega+r}$ for some $q,r\in\integers$, some swords $\rho,\beta$ of height $h-1$ and some swords $\pi,\alpha$ of height at most $h-1$. By symmetry, we may assume that $|\alpha|\le|\pi|$.

We start with finding $\tau_1=LCP(\alpha, \pi)$ using the external induction hypothesis. If $|\tau_1|< |\alpha|$, then clearly $LCP(\sigma,\delta)=\tau_1$. Otherwise, $\tau_1=\alpha$ and $\pi=\tau_1\pi'$ for some sword $\pi'$ of height less than $h$. Now we should find the longest common prefix $\tau_2$ of the swords $\beta^{\omega+r}$ and $\pi'\rho^{\omega+q}$. First, we verify whether $\tau_2$ can be ``long'', that is, whether its length can exceed $\max\{|\beta^2|,|\pi'\rho^2|\}$. Since $\tau_2$ is a prefix of $\beta^{\omega+r}$, we conclude that $\pi'=\beta^k\beta_1$ for some integer $k\ge0$ and some prefix $\beta_1$ of $\beta$. Let $\beta_2$ be such that $\beta=\beta_1\beta_2$. Then $\beta^{\omega+r}=\pi'\beta_2\beta^{\omega+r-k-1}=\pi'(\beta_2\beta_1)^{\omega+r-k-1}\beta_2$, and  Lemma~\ref{Lemma_Periodic_Square} implies that $\rho=\beta_2\beta_1$, see Fig.~5.
\begin{center}
\unitlength=.7mm
\begin{picture}(180,35)(0,-5)
\gasset{AHnb=0}
\drawline[linewidth=.5](5,7.5)(175,7.5)
\put(178,7.5){\dots}
\put(168,17.5){\dots}
\put(153,2.5){\dots}
\drawline(5,0)(5,25)
\drawline(45,7.5)(45,25)
\drawline(85,7.5)(85,25)
\drawline(125,7.5)(125,25)
\drawline(165,7.5)(165,25)
\drawline(30,7.5)(30,15)
\drawline(70,0)(70,15)
\drawline(110,0)(110,15)
\drawline(150,0)(150,15)
\gasset{AHnb=1}
\drawline(15,22.5)(45,22.5)
\drawline(15,22.5)(5,22.5)
\drawline(55,22.5)(45,22.5)
\drawline(55,22.5)(85,22.5)
\drawline(95,22.5)(85,22.5)
\drawline(95,22.5)(125,22.5)
\drawline(135,22.5)(125,22.5)
\drawline(135,22.5)(165,22.5)
\drawline(15,12.5)(30,12.5)
\drawline(15,12.5)(5,12.5)
\drawline(35,12.5)(30,12.5)
\drawline(35,12.5)(45,12.5)
\drawline(55,12.5)(70,12.5)
\drawline(55,12.5)(45,12.5)
\drawline(75,12.5)(70,12.5)
\drawline(75,12.5)(85,12.5)
\drawline(95,12.5)(110,12.5)
\drawline(95,12.5)(85,12.5)
\drawline(115,12.5)(110,12.5)
\drawline(115,12.5)(125,12.5)
\drawline(135,12.5)(150,12.5)
\drawline(135,12.5)(125,12.5)
\drawline(155,12.5)(150,12.5)
\drawline(155,12.5)(165,12.5)
\drawline(90,2.5)(70,2.5)
\drawline(90,2.5)(110,2.5)
\drawline(21,2.5)(5,2.5)
\drawline(21,2.5)(70,2.5)
\drawline(130,2.5)(150,2.5)
\drawline(130,2.5)(110,2.5)
\put(24.5,25){$\beta$}
\put(64.5,25){$\beta$}
\put(104.5,25){$\beta$}
\put(144.5,25){$\beta$}
\put(16,15){$\beta_1$}
\put(56,15){$\beta_1$}
\put(96,15){$\beta_1$}
\put(136,15){$\beta_1$}
\put(36,15){$\beta_2$}
\put(76,15){$\beta_2$}
\put(116,15){$\beta_2$}
\put(156,15){$\beta_2$}
\put(129.5,-2){$\rho$}
\put(38.5,-2){$\pi'$}
\put(89.5,-2){$\rho$}
\end{picture}\\
Figure 5: Structure of $\tau_2$ in case $|\tau_2|>\max\{|\beta^2|,|\pi'\rho^2|\}$
\end{center}
Hence, $\pi'\rho=\beta^k\beta_1\beta_2\beta_1=\beta\beta^k\beta_1=\beta\pi'$. Given the swords $\pi',\rho,\beta$, all of height less than $h$, we can verify whether the equality $\pi'\rho=\beta\pi'$ holds in view of  Corollary~\ref{Cor_Compare_Swords}. If the equality holds true, then clearly $\pi'\rho^\omega=\beta^\omega\pi'$ whence $\tau_2$ is the shortest of the swords $\pi'\rho^{\omega+q}$ and $\beta^{\omega+r}$. If the equality fails,
we conclude that $|\tau_2|\le\max\{|\beta^2|,|\pi'\rho^2|\}$ and by the external induction assumption we can find $\tau_2$ as $LCP(\beta^{\ell},\pi'\rho^{\ell})$ where $\ell$ is large enough to ensure that $\min\{|\beta^{\ell}|,|\pi'\rho^{\ell}|\}>\max(|\beta^2|,|\pi'\rho^2|)$. Finally, we obtain $LCP(\sigma,\delta)=\tau_1\tau_2$.

Now consider the general case. Isolating the leftmost factors of height $h$ in the height representations of the swords $\sigma$ and $\delta$ chosen at the beginning of the proof, we can represent these swords as follows:
\[
\sigma=\pi\rho^{\omega+q}\sigma', \quad \delta=\alpha\beta^{\omega+r}\delta',
\]
where $q,r\in\integers$, $h(\rho)=h(\beta)=h-1$, $h(\alpha),h(\pi)\leq h-1$. Using the algorithm for the special case that we described above, we can construct $\gamma_1=LCP(\pi\rho^{\omega+q},\alpha\beta^{\omega+r})$. If $h(\gamma_1)<h$, then clearly $LCP(\sigma,\delta)=\gamma_1$. Otherwise, the construction guarantees that $\gamma_1$ is the shortest of the swords $\pi\rho^{\omega+q}$ and $\alpha\beta^{\omega+r}$. For certainty, suppose that $\gamma_1=\pi\rho^{\omega+q}$ and $\alpha\beta^{\omega+r}=\gamma_1\delta''$. Since the total number of factors of height $h$ in some height representations of the swords $\sigma'$ and $\delta''\delta'$ is smaller than $t$, we can apply the internal induction assumption to find  $\gamma_2=LCP(\sigma',\delta''\delta')$. Then $LCP(\sigma,\delta)=\gamma_1\gamma_2$.
\end{proof}

In order to close the cycle of simultaneous induction, it remains to prove the following result:
\begin{Lemma}\label{Lemma_Algorythm_Why_Not_Normal}
There exists an algorithm that, given a normal sword $\sigma$ of height $h$, decides whether or not there exists a fully normal sword $x$ such that either $\sigma$ is an $\omega$-power of $x$ or there is an integer $m\ge2$ such that $\sigma=x^m$. If the answer is positive, the algorithm finds $x$ in the first case and all possible pairs $(x,m)$ in the second case.
\end{Lemma}

\begin{proof}
If $\sigma=x^{\omega+q}$ for some sword $x$ and integer $q$, then $|\sigma|=(\omega+q)|x|$ whence the number $-q$ must be a root of the polynomial $|\sigma|$. Thus, we can find all integer roots $q_1,\dots,q_n$ of $|\sigma|$ and  for each $i=1,\dots,n$, apply Lemma~\ref{Lemma_Find_Prefix_of_certain_length} to verify if $\sigma$ has a prefix $x_i$ of length $|\sigma|/(\omega+q_i)$. If this is the case, it remains to verify whether $x_i^{\omega+q_i}=\sigma$, and this can be done in view of Corollary~\ref{Cor_Compare_Swords}.

Similarly, $\sigma=x^m$ for some sword $x$ and integer $m\ge2$, then the number $m$ divides all coefficients of the polynomial $|\sigma|$. Thus, we can find all common divisors $m_1,\dots,m_k$ of the coefficients of $|\sigma|$
such that $m_1,\dots,m_k\ge2$. Then for each $j=1,\dots,k$, we apply Lemma~\ref{Lemma_Find_Prefix_of_certain_length} to check if $\sigma$ has a prefix $x_j$ of length $|\sigma|/m_j$, and if this is the case, we use Corollary~\ref{Cor_Compare_Swords} to verify whether $x_j^{m_j}=\sigma$.
\end{proof}

\section{Proof of the theorem}
\label{sec:Proof of the theorem}
\subsection{Normal swords and epigroup identities}
Recall that we consider epigroups as unary semigroups under the unary operation of pseudoinversion $x\mapsto\ov{x}$. It is known~\cite{Shevrin_survey} that the following identities hold in the class $\mathfrak{E}$ of all epigroups:
  \begin{gather}~\label{xyx}
   \ov{(xy)}x\is x\ov{(yx)},\\
    \label{x'2x=x'}
   \ov{x}^2x\is \ov{x},\\
   \label{x2x'=x''}
    x^2\ov{x}\is\ov{\ov{x}},\\
  \label{(x'x)'=x'x}
    \ov{\ov{x}x}\is\ov{x}x,\\
  \label{xk'=x'k}
    \ov{x^p}\is \ov{x}^p \hbox to 0mm{\quad\text{for each prime $p$}.}
  \end{gather}
We will often use also the identity $\ov{x}x\is x\ov{x}$ that can be easily deduced from \eqref{xyx}--\eqref{xk'=x'k}. Indeed,
\[
\ov{x}x\ \stackrel{\eqref{x'2x=x'}}{\is}\ \ov{x}^2x\cdot x\ \stackrel{\eqref{xk'=x'k}}{\is}\ \ov{x^2}x\cdot x\
\stackrel{\eqref{xyx}}{\is}\ x\ov{x^2}\cdot x=x\cdot \ov{x^2}x\ \stackrel{\eqref{xk'=x'k}}{\is}\ x\cdot\ov{x}^2x\ \stackrel{\eqref{x'2x=x'}}{\is}\ x\ov{x}.
\]

Every unary semigroup term becomes a $\mathbb{Z}$-unary word if one replaces every expression of the form $\ov{x}$ by $x^{\omega-1}$. Conversely, every $\mathbb{Z}$-unary word will be treated as an epigroup term in which expressions of the form $x^{\omega+q}$, $q\in\integers$, are interpreted as follows:
\[
x^{\omega+q}=
\begin{cases}
    \ov{x}x^{q+1}, & \hbox{ if } q \hbox{ is non-negative;} \\
    \ov{x}^{(-q)}, & \hbox{ otherwise.}
\end{cases}
\]

\begin{Proposition}\label{Prop_Identities_in_Swords}
If\/ $\mathbb{Z}$-unary words $\sigma,\sigma'$ are $\mathcal{S}$-related, then the identity $\sigma\is\sigma'$ holds in $\mathfrak{E}$.
\end{Proposition}

\begin{proof}
It suffices to verify that, for every integer $q$, each epigroup satisfies the identities
\begin{gather}
\label{eq:3 identities_a}
xx^{\omega+q}\is x^{\omega+q+1},\\
\label{eq:3 identities_b}
x^{\omega+q}x\is x^{\omega+q+1},\\
\label{eq:3 identities_c}
(xy)^{\omega+q} x\is x(yx)^{\omega+q}
\end{gather}
that correspond to the pairs \eqref{eq:wind1}--\eqref{eq:roll} generating the relation $\mathcal{S}$ as a fully invariant congruence.

First, suppose that $q\ge0$. Then $x^{\omega+q}=\ov{x}x^{q+1}$. Note that, since $\ov{x}x\is x\ov{x}$, we have $\ov{x}x^{q+1}\is x^{q+1}\ov{x}$. Therefore, the identities \eqref{eq:3 identities_a} and \eqref{eq:3 identities_b} clearly hold in $\mathfrak{E}$ since
\[
xx^{\omega+q}\is xx^{q+1}\ov{x}=x^{q+2}\ov{x}\is x^{\omega+q+1}\ \text{ and }\
x^{\omega+q}x=\ov{x}x^{q+1}x=\ov{x}x^{q+2}=x^{\omega+q+1}.
\]
Consider the identity  \eqref{eq:3 identities_c}. We have
\[
(xy)^{\omega+q}x=\ov{xy}(xy)^{q+1}x=\ov{xy}x(yx)^{q+1}\stackrel{\eqref{xyx}}{\is} x\ov{yx}(yx)^{q+1}=x(yx)^{\omega+q}.
\]

Now suppose that $q<0$. In this case $x^{\omega+q}=\ov{x}^{(-q)}$. If $q=-1$, then the identity \eqref{eq:3 identities_a} becomes $xx^{\omega-1}\is x^{\omega}$, and it clearly holds in $\mathfrak{E}$ because $xx^{\omega-1}=x\ov{x}$ while $x^\omega=\ov{x}x$. For $q<-1$, we use~\eqref{x'2x=x'} to obtain
\[
xx^{\omega+q}=x\ov{x}^{(-q)}=x\ov{x}^2\ov{x}^{(-q-2)}\stackrel{\eqref{x'2x=x'}}{\is}\ov{x}\,\ov{x}^{(-q-2)}= \ov{x}^{(-q-1)}=x^{\omega+q+1}.
\]
The identity \eqref{eq:3 identities_b} is treated in the same way. Finally, we have
\begin{multline*}
(xy)^{\omega+q}x=\ov{xy}^{(-q)}x=\ov{xy}^{(-q-1)}\ov{xy}x\stackrel{\eqref{xyx}}{\is} \ov{xy}^{(-q-1)}x\ov{yx}\stackrel{\eqref{xyx}}{\is}\dots\\
\stackrel{\eqref{xyx}}{\is}x\ov{yx}\,\ov{yx}^{(-q-1)}= x\ov{yx}^{(-q)}=x(yx)^{\omega+q},
\end{multline*}
thus establishing \eqref{eq:3 identities_c}.
\end{proof}

Proposition~\ref{Prop_Identities_in_Swords} allows us to treat expressions of the form $\sigma\is\sigma'$, with $\sigma,\sigma'$ being swords, as epigroup identities.

\begin{Proposition}\label{Prop_Nornal_Equalities}
Let $\sigma_1$ and $\sigma_2$ be two swords and let $\alpha_1$ and respectively $\alpha_2$ be their normal forms. Then the identity $\sigma_1\is\sigma_2$ holds in $\mathfrak{E}$ if and only if so does the identity $\alpha_1\is\alpha_2$.
\end{Proposition}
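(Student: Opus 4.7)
The proposition reduces immediately to the claim that whenever $\alpha$ is the normal form of $\sigma$, the equality $\sigma\is\alpha$ already holds in every epigroup; the ``iff'' then follows by transitivity of $\is$. A normal form is reached by a finite chain of $\mathcal{R}/\mathcal{S}$-rewrites, and Proposition~\ref{Prop_Identities_in_Swords} already handles every $\mathcal{S}$-step, so it suffices to verify that each of the three generating pairs of $\mathcal{R}$,
\begin{gather*}
x^{\omega+q}x^{\omega+p}\is x^{\omega+q+p},\\
(x^n)^{\omega+q}\is x^{\omega+nq},\\
(x^{\omega+p})^{\omega+q}\is x^{\omega+pq},
\end{gather*}
is a valid identity of $\mathcal{E}$.

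For each generator I would unfold both sides using the definitions $x^{\omega+k}=\overline{x}x^{k+1}$ when $k\ge 0$ and $x^{\omega+k}=\overline{x}^{-k}$ when $k<0$ (given just before Proposition~\ref{Prop_Identities_in_Swords}), then compute inside the maximal subgroup $G_x$, using that $e_x=x\overline{x}=\overline{x}x$ is the identity of $G_x$ and commutes with every power of $x$ and with $\overline{x}$. The first generator then becomes a case analysis on the signs of $p,q$ in which \eqref{x'2x=x'} is applied repeatedly to absorb positive powers of $x$ against $\overline{x}$ until both sides coincide in $G_x$. The third generator uses that $x^{\omega+p}\in G_x$, so $\overline{x^{\omega+p}}$ is its group-theoretic inverse there; unfolding $(x^{\omega+p})^{\omega+q}$ and appealing to \eqref{x2x'=x''} together with the group law in $G_x$ then yields $x^{\omega+pq}$.

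The second generator requires first extending \eqref{xk'=x'k} from primes to all integers $n\ge 2$. I would prove this by induction on the number of prime factors of $n$: writing $n=mp$ with $p$ prime and setting $y:=x^m$, the prime identity gives $\overline{x^n}=\overline{y^p}\is\overline{y}^p=\overline{x^m}^p$, whence $\overline{x^n}\is\overline{x}^n$ by the inductive hypothesis applied to $m$. With this extension in hand, the identity $(x^n)^{\omega+q}\is x^{\omega+nq}$ follows by unfolding and using \eqref{x'2x=x'} to collapse the surplus factors of $x^n$.

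The main obstacle I anticipate is precisely this bookkeeping for the second generator: combining the extended prime identity with the sign-dependent unfolding of $x^{\omega+k}$ and reconciling $(x^n)^{\omega+q}$ with $x^{\omega+nq}$ across the cases $q\ge 0$, $q<0$, $nq\ge 0$, $nq<0$ is fiddly but routine. Once the three generator-level identities are established, the conclusion is a one-line application of transitivity: the rewrite chains $\sigma_i\to\alpha_i$ give $\sigma_i\is\alpha_i$ in $\mathcal{E}$, and therefore $\sigma_1\is\sigma_2$ holds in $\mathcal{E}$ iff $\alpha_1\is\alpha_2$ does.
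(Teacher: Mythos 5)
Your proposal is correct, and its skeleton coincides with the paper's: reduce to showing $\sigma\is\alpha$ for a sword and its normal form, hence to the three generating pairs of $\mathcal{R}$ (the $\mathcal{S}$-steps being covered by Proposition~\ref{Prop_Identities_in_Swords}), and extend identity~\ref{xk'=x'k} from primes to all $n\ge 2$ by factoring $n$ — exactly the paper's remark. Where you genuinely diverge is in how the generators are verified. The paper gives purely equational derivations from the identities~\ref{xyx}--\ref{xk'=x'k}: it handles $x^{\omega+p}x^{\omega+q}\is x^{\omega+p+q}$ by an explicit sign case analysis (including the mixed-sign case), first derives the auxiliary identity $\overline{x^{\omega+p}}\is x^{\omega-p}$ from the basis, and only then treats $(x^{\omega+p})^{\omega+q}$ by cases on the sign of $q$. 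You instead propose to compute semantically inside the maximal subgroup $G_x$, using that $e_x$ commutes with powers of $x$ and that $\overline{x^{\omega+p}}$ is the group-theoretic inverse of $x^{\omega+p}$ in $G_x$; in effect $x^{\omega+k}=(xe_x)^k$ in $G_x$, after which all three generators are one-line group computations. For the proposition as stated (which only asserts validity in $\mathcal{E}$) this is legitimate and arguably cleaner. What the paper's longer route buys is derivability: every $\mathcal{R}$-step is exhibited as a consequence of the identities listed in Theorem~\ref{THM_Main}, which is what one ultimately needs when claiming those identities form a basis; your structural shortcut proves validity but not derivability, so if this argument is meant to feed into the basis part of Theorem~\ref{THM_Main} you would need to redo the third generator (and the semantic facts about $G_x$) equationally, as the paper does. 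The sign-dependent bookkeeping you defer as routine is carried out in full in the paper and does go through.
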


\begin{proof}
It is enough to show  that if $\alpha$ is a normal form of a sword $\sigma$, then the identity $\sigma\is\alpha$ holds in $\mathfrak{E}$. For this, it clearly suffices to prove that $\mathfrak{E}$ satisfies the identities
\begin{gather}
\label{eq:3 next identities_a}
x^{\omega+r}x^{\omega+q}\is x^{\omega+r+q},\\
\label{eq:3 next identities_b}
(x^n)^{\omega+q}\is x^{\omega+nq},\\
\label{eq:3 next identities_c}
(x^{\omega+r})^{\omega+q}\is x^{\omega+rq}
\end{gather}
for all $q,r\in\integers$ and all $n\ge2$ as these identities correspond to the reduction rules \eqref{eq:addition}--\eqref{eq:multiplication2} used in the normalization procedure.

The proof of~\eqref{eq:3 next identities_a} splits in 4 cases depending on the signs of $q$ and $r$. If $q,r\ge0$, we have
\[
x^{\omega+r}x^{\omega+q}=\ov{x}x^{r+1}\ov{x}x^{q+1}\is \ov{x}^2x^{r+q+2}\ \stackrel{\eqref{x'2x=x'}}{\is}\ \ov{x}x^{p+q+1}=x^{\omega+p+q}
\]
If $q,r<0$, the proof is straightforward since
\[
x^{\omega+r}x^{\omega+q}=\ov{x}^{(-r)}\,\ov{x}^{(-q)}=\ov{x}^{(-r-q)}=x^{\omega+r+q}.
\]

Assume that $r\ge0$ and $q<0$. Then
\[
x^{\omega+r}x^{\omega+q}=\ov{x}x^{r+1}\ov{x}^{(-q)}\is\ov{x}^{(-q+1)}x^{r+1}\ \stackrel{\eqref{x'2x=x'}}{\is}\
\begin{cases}
\ov{x}x^{r+q+1} &\text{ if } r\ge-q;\\
\ov{x^{-q-r}}   &\text{ if } r<-q.
\end{cases}
\]
On the other hand, by the definition, we have
\[
x^{\omega+r+q}=
\begin{cases}
\ov{x}x^{r+q+1} &\text{ if } r\ge-q;\\
\ov{x^{-q-r}}   &\text{ if } r<-q.
\end{cases}
\]
Thus, we conclude that $x^{\omega+r}x^{\omega+q}\is x^{\omega+r+q}$. The case where $r<0$ and $q\ge0$ is completely analogous.

Before we proceed with proving \eqref{eq:3 next identities_b}, we notice that the identity $\ov{x^n}\is\ov{x}^n$ holds in $\mathfrak{E}$ for every positive integer $n$. For this, we induct on $n$. The claim is trivial for $n=1$. If $n>1$, represent $n$ as $n=kp$, with $p$ being prime. Then we have $\ov{x^k}\is\ov{x}^k$ by the induction assumption whence
\[
\ov{x^n}=\ov{x^{kp}}=\ov{(x^k)^p}\ \stackrel{\eqref{xk'=x'k}}{\is}\ \ov{x^k}^p\is(\ov{x}^k)^p=\ov{x}^{kp}=\ov{x}^n. \]

Now, for each $q\ge0$, we have
\[
(x^n)^{\omega+q}=\ov{x^n}(x^n)^{q+1}\is \ov{x}^nx^{nq+n}\ \stackrel{\eqref{x'2x=x'}}{\is}\
\ov{x}^{n-1}x^{nq+n-1}\ \stackrel{\eqref{x'2x=x'}}{\is}\dots\ \stackrel{\eqref{x'2x=x'}}{\is}\ \ov{x}x^{nq+1}=x^{\omega+nq}.
\]
For $q<0$, we have
\[
(x^n)^{\omega+q}=\ov{x^n}^{(-q)}\is(\ov{x}^n)^{-q}=\ov{x}^{(-nq)}=x^{\omega+nq}.
\]

In order to verify \eqref{eq:3 next identities_c}, we first show that $\ov{\ov{\ov{x}}}\is\ov{x}$. Indeed, substituting $\ov{x}$ for $x$ in~\eqref{x2x'=x''}, we get
\[
\ov{\ov{\ov{x}}}\is\ov{x}^2\ov{\ov{x}}\ \stackrel{\eqref{x2x'=x''}}{\is}\ \ov{x}^2x^2\ov{x}\is \ov{x}^3x^2\ \stackrel{\eqref{x'2x=x'}}{\is}\ \ov{x}^2x\ \stackrel{\eqref{x'2x=x'}}{\is}\ \ov{x}.
\]
We use this to deduce yet another auxiliary identity, namely, $\ov{x^{\omega+r}}\is x^{\omega-r}$ for every integer $r\in\integers$. If $r>0$, we have
\begin{multline*}
\ov{x^{\omega+r}}=\ov{\ov{x}x^{r+1}}\ \stackrel{\eqref{x'2x=x'}}{\is}\ \ov{\ov{x}^2x^{r+2}}\ \stackrel{\eqref{x'2x=x'}}{\is}\dots\ \stackrel{\eqref{x'2x=x'}}{\is}\ \ov{\ov{x}^rx^{2r}}\is\ov{(\ov{x}x^2)^r}\is
\left(\ov{\ov{x}x^2}\right)^r\is\\
\left(\ov{x^2\ov{x}}\right)^r\ \stackrel{\eqref{x2x'=x''}}{\is}\ \left(\ov{\ov{\ov{x}}}\right)^r\is\ov{x}^r=(x^{\omega-1})^r\is x^{\omega-r}.
\end{multline*}
Here we have repeatedly employed \eqref{eq:3 next identities_a} in the final step. If $r=0$, we have
\[
\ov{x^\omega}=\ov{\ov{x}x}\ \stackrel{\eqref{(x'x)'=x'x}}{\is}\ \ov{x}x=x^\omega.
\]
If $r<0$, we have
\[
\ov{x^{\omega+r}}=\ov{\ov{x}^{(-r)}}\is\ov{\ov{x}}^{(-r)}\ \stackrel{\eqref{x2x'=x''}}{\is}\ (x^2\ov{x})^{-r}\is (\ov{x}x^2)^{-r}=\ov{x}^{-r}x^{-2r}\stackrel{\eqref{x'2x=x'}}{\is}\dots\ \stackrel{\eqref{x'2x=x'}}{\is}\ \ov{x}x^{-r+1}= x^{\omega-r}.
\]

Now we are ready to deduce the identity~\eqref{eq:3 next identities_c}. If $q\ge0$, we have
\[
(x^{\omega+r})^{\omega+q}=\ov{x^{\omega+r}}(x^{\omega+r})^{q+1}\is x^{\omega-r}(x^{\omega+r})^{q+1}\is x^{\omega+rq},
\]
and if $q<0$, we have
\[
(x^{\omega+r})^{\omega+q}\is \ov{x^{\omega+r}}^{(-q)}\is(x^{\omega-r})^{(-q)}\is x^{\omega+rq},
 \]
where the final steps in each of the two deductions follow from \eqref{eq:3 next identities_a}.
\end{proof}

\subsection{Connections to Burnside varieties}
For any pair $(m,k)$ of positive integers, the semigroup variety $\mathfrak{B}_{m,k}$ defined by the identity $x^{m}\is x^{m+k}$ is called a \emph{Burnside variety}. Let $A$ be a finite set and denote by $B(A, m, k)$ the free $A$-generated semigroup in the variety $\mathfrak{B}_{m,k}$. We recall some results due to Victor Guba~\cite{Guba1,Guba2}.

\begin{Theorem}[{\mdseries\cite{Guba2}, Theorem A}]
\label{THM_Burnside_Decidable}
For $m\ge3$, $k\ge1$, the word problem for the semigroup $B(A, m, k)$ is decidable.
\end{Theorem}

\begin{Theorem}
\label{THM_Burnside_Factors}
For $m\ge3$, $k\ge1$, every element of the semigroup $B(A, m, k)$ has a finite number of factors.
\end{Theorem}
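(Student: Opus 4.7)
The plan is to exploit Guba's rewriting / normal form theory for $B(A,m,n)$ to reduce the question to a bounded combinatorial search. Fix a canonical representative $\hat{w}\in A^+$ for the given element $w$, for example the normal form with respect to the confluent rewriting $x^{m+n}\to x^{m}$ that underlies Guba's decidability result (Theorem~\ref{THM_Burnside_Decidable}). The goal is to show that, up to equality in $B(A,m,n)$, only finitely many $u\in A^+$ can arise as prefixes, i.e.\ that finitely many normal-form words $\hat{u}$ represent such prefixes.

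Suppose $u$ is a prefix of $w$ in $B(A,m,n)$. Pick $v$ with $w=uv$ and take normal-form representatives $\hat{u},\hat{v}$. Then $\hat{u}\hat{v}$ rewrites to $\hat{w}$, but since $\hat{u}$ and $\hat{v}$ are each already irreducible, every applied redex must straddle the cut point between them. The heart of the argument is to quantify how much material can be involved in these straddling reductions. For $m\ge 3$ (which rules out the pathological overlaps that make the classical Burnside group problem infinite), Guba's analysis of maximal periodic factors in reduced words bounds both the period and the length of any such straddling redex in terms of $|\hat{w}|$. Consequently $\hat{u}$ must agree with a prefix of $\hat{w}$ outside a window of bounded size near its right end, and only finitely many completions of that window yield an irreducible word.

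Combining, the set of candidate $\hat{u}$ is a finite union, indexed by the at most $|\hat{w}|$ prefixes of $\hat{w}$, of finite sets of bounded tails, hence finite. Therefore the number of prefixes of $w$ in $B(A,m,n)$ is finite. The suffix case is entirely symmetric: either apply the argument to the reversed alphabet, or orient the rewriting from the right.

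The main obstacle is quantifying the bounded straddling window in the middle step; this is precisely where the hypothesis $m\ge 3$ is essential, and where one must invoke the genuinely hard combinatorics on words from Guba's paper. In particular, the key structural fact is that two sufficiently long overlapping periodic factors of a reduced Burnside word must share the same period, so that reduction near the $\hat u|\hat v$ interface cannot silently shuttle arbitrarily long periodic blocks from one side to the other. Once this is granted, the remainder of the proof is routine bookkeeping on normal forms, of the same flavour as the normalization arguments of Section~\ref{sec:Normal swords}.
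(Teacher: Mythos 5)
Your proposal addresses a statement that the paper itself does not prove: Theorem~\ref{THM_Burnside_Factors} is simply recalled, alongside Theorem~\ref{THM_Burnside_Decidable}, from Guba's papers \cite{Guba1,Guba2} (``Now we recall some results of\dots''), so there is no in-paper argument to compare against; the question is whether your sketch stands on its own, and it does not. Its starting point is already inaccurate: the length-reducing rewriting $v^{m+n}\to v^{m}$ is \emph{not} confluent, and Guba's decidability result is not obtained as ``normal forms of that rewriting'' --- his reduced forms are defined by a delicate simultaneous induction which the present paper explicitly declines even to reproduce. If the naive system were confluent, Theorem~\ref{THM_Burnside_Decidable} would be close to trivial, which it is not.

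More importantly, the heart of your argument --- that in any reduction of $\hat{u}\hat{v}$ to $\hat{w}$ every applied redex straddles the cut point, hence $\hat{u}$ agrees with a prefix of $\hat{w}$ outside a window of bounded size --- is precisely the hard content, and you do not prove it; you only gesture at ``Guba's analysis of maximal periodic factors.'' Only the \emph{first} redex is forced to straddle the cut: a straddling reduction can create new redexes lying entirely inside the territory of $\hat{u}$, and such cascades can propagate arbitrarily far from the interface; taming exactly these cascades is what makes the Burnside word problem difficult for small $m$. Without that control you also cannot bound $|\hat{u}|$ in terms of $|\hat{w}|$ (the concatenation only bounds the length of the resulting normal form, not of its factors), so the final counting of ``at most $|\hat{w}|$ prefixes times boundedly many tails'' does not go through. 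In effect the proposal assumes, without proof, statements of the same depth as the theorem itself; the honest course --- and the one the paper takes --- is to cite the finiteness of left and right divisors directly from Guba's work rather than rederive it by this route.
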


The next statement easily follows from Theorem~\ref{THM_Burnside_Factors}. We supply a proof for the sake of completeness.
\begin{Proposition}
\label{Prop_Finite_Members_Burnside}
For $m\ge3$, $k\ge 1$, the Burnside variety $\mathfrak{B}_{m,k}$ is generated by its finite members.
\end{Proposition}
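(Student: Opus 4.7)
The plan is a standard Birkhoff-style argument combined with Rees quotients by cofinite ideals. Since any identity failing in a semigroup variety involves only finitely many variables, to show that $\mathrm{var}[x^{m}\is x^{m+n}]$ is generated by its finite members it is enough to prove that for every finite alphabet $A$ the free object $B(A,m,n)$ is residually finite in the variety, i.e.\ that for any two distinct elements $u,v\in B(A,m,n)$ there is a homomorphism from $B(A,m,n)$ onto a finite semigroup in the variety that separates $u$ from $v$.

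Given such $u\neq v$, the plan is to let $F$ be the union of the sets of two-sided factors of $u$ and of $v$, where $a$ is a factor of $w$ means $w=xay$ for some $x,y\in B(A,m,n)^{1}$. Every factor of $w$ is a suffix of some prefix of $w$, so applying Theorem~\ref{THM_Burnside_Factors} first to collect the finitely many prefixes of $w$ and then to collect the finitely many suffixes of each of those prefixes shows that the factor set of $w$ is finite; doing this for both $u$ and $v$ makes $F$ finite. Next I would verify that the complement $I:=B(A,m,n)\setminus F$ is a two-sided ideal: if $a\in I$ and $b\in B(A,m,n)$, then a factorisation $u=x(ab)y$ (or analogously for $v$ or for $ba$) would exhibit $a$ itself as a factor of $u$ or $v$, contradicting $a\notin F$.

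Finally, the Rees quotient $B(A,m,n)/I$ has $|F|+1$ elements and is a homomorphic image of $B(A,m,n)$; since varieties are closed under homomorphic images, it still satisfies $x^{m}\is x^{m+n}$ and therefore belongs to the variety. Under the canonical projection, both $u$ and $v$ lie in $F$, so neither is collapsed to the zero class and they remain distinct from one another, yielding the required separating finite quotient. The argument is entirely routine once Theorem~\ref{THM_Burnside_Factors} is in hand; the only point needing brief care is the passage from finite prefix/suffix sets to finiteness of the factor set, and the verification that the resulting complement is genuinely an ideal, so no real obstacle arises.
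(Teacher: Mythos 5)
Your proposal is correct and follows essentially the same route as the paper: finiteness of prefixes and suffixes (Theorem~\ref{THM_Burnside_Factors}) gives finiteness of the factor set, whose complement is an ideal, and the resulting finite Rees quotient separates the two given elements, yielding residual finiteness of $B(A,m,n)$ and hence generation of the variety by its finite members. The only cosmetic difference is that the paper separates $a\neq b$ via the Rees congruence attached to the factor set of the product $ab$, whereas you use the union of the factor sets of $u$ and $v$; these are interchangeable.
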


\begin{proof}
Take any two different elements $a,b\in B(A, m, k)$ and let $F_{ab}$ the set of all factors of their product $ab$. By Theorem~\ref{THM_Burnside_Factors}, $F_{ab}$ is finite, and it is easy to see that the set $I(ab)=B(A,m,k)\setminus F_{ab}$ forms an ideal of $B(A, m, k)$. Thus, the Rees quotient $B(A,m,k)/I(ab)$ is finite and the natural homomorphism  $B(A, m, k)\to B(A,m,k)/I(ab)$ separates $a$ and $b$ since $a,b\notin I(ab)$. Therefore, the semigroup $B(A, m, k)$ is residually finite for any finite set $A$. Since each variety is generated by the collection of its finitely generated free objects, the variety $\mathfrak{B}_{m,k}$ is generated by its finite members.
\end{proof}

In the sequel, it will be convenient to consider only the varieties $\mathfrak{B}_{m,k}$ with $k>m$.
Observe that every such variety satisfies the identity $x^k\is x^{2k}$. Indeed,
\[
x^{2k}=x^{(m+k)+(k-m)}=x^{m+k}x^{k-m}\is x^mx^{k-m}=x^{m+(k-m)}=x^k.
\]
Semigroups in the Burnside varieties are periodic, and hence, we may (and will) treat them as epigroups. It is easy to see that in every epigroup from $\mathfrak{B}_{m,k}$ with $k>m$ one has $\ov{x}=x^{k-1}$ and $x^{\omega}=\ov{x}x= x^k$.

Let $\alpha$ be a $\mathbb{Z}$-unary word. We define the \emph{depth} of $\alpha$ as follows:
\[
d(\alpha):=\max\{0,-q\mid \text{ the expression } \omega+q \text{ occurs in } \alpha \}.
\]
For instance, $d\left((x^{\omega-4}yx^{\omega+30})^{\omega-1}xy^{\omega}\right)=4$. Given an integer $k>d(\alpha)$, we denote by $\alpha^{(k)}$ the expression obtained from $\alpha$ by substituting $k$ for each occurrence of $\omega$ in $\alpha$. The inequality $k>d(\alpha)$ ensures that after the substitution, all expressions $\omega+q$ involved in $\alpha$ become positive integers, and hence, $\alpha^{(k)}$ is in fact a well-formed ordinary word. Now take any $\ell>d(\alpha)$, any $k>\ell$ and let $m=k-\ell$. Then it is easy to see that the identity $\alpha\is\alpha^{(k)}$ holds true in the variety $\mathfrak{B}_{m,k}$.

We want to extend ``ordinarization'' transforms of the form $\alpha\mapsto\alpha^{(k)}$ to swords. There is a subtlety here because the concept of depth is well defined only for $\mathbb{Z}$-unary words and not for swords: if two $\mathbb{Z}$-unary words $\alpha$ and $\alpha'$ are obtained from each other by ``unwinding'' or ``winding up'', they represent the same sword, but the equality $d(\alpha)=d(\alpha')$ need not be true. Therefore, given a sword $\sigma$, we should first fix a $\mathbb{Z}$-unary word $\alpha$ representing $\sigma$ and choose an integer $k$ strictly larger than $d(\alpha)$; we then denote by $\sigma^{(k)}$ the word $\alpha^{(k)}$. Observe that the conclusion that the identity $\sigma\is\sigma^{(k)}$ holds in the variety $\mathfrak{B}_{m,k}$ remains valid for \textbf{every} choice of a $\mathbb{Z}$-unary word representing the sword $\sigma$ provided that $k$ is strictly larger than $m$ and $\ell=k-m$ is is strictly larger than the depth of the chosen $\mathbb{Z}$-unary word.

We need also yet another auxiliary construction $\sigma\mapsto\wh{\sigma}$ that associates ordinary words to swords. Again, it is first defined for $\mathbb{Z}$-unary words and then we extend it to swords by utilizing the same approach as above, that is, by fixing a $\mathbb{Z}$-unary word representing a given sword. If $\alpha$ is a $\mathbb{Z}$-unary word, we define $\wh{\alpha}$ using induction on height. If $h(\alpha)=0$, i.e., if $\alpha$ is an ordinary word, then $\wh{\alpha}:=\alpha$. If $h(\alpha)>0$, let $\pi_0\rho_1^{\omega+q_1}\pi_1\cdots\rho_n^{\omega+q_n}\pi_n$ be the height representation of $\alpha$. Then for each $i=1,\dots,n$, we define $q'_i=2$ if $q_i\le2$ and $q'_i=q_i$ otherwise, and let $\wh{\alpha}:= \wh{\pi_0}\wh{\rho_1}^{q'_1}\wh{\pi_1}\cdots \wh{\rho_n}^{q'_n}\wh{\pi_n}$. For instance, if $\sigma=(x^{\omega-4}yx^{\omega+30})^{\omega-1}xy^{\omega}$, then $\wh{\sigma}=(x^2yx^{30})^2yx^2$.

In the following statements and their proofs, whenever both constructions $\sigma^{(k)}$ and $\wh{\sigma}$ are used, we assume that they are produced from the same $\mathbb{Z}$-unary word representing the sword $\sigma$.

\begin{Lemma}\label{Lemma_Equality_to_Burnside}
Let $\sigma_1,\sigma_2$ be two normal swords. Suppose that $\sigma_1^{(k)}=\sigma_2^{(k)}$ for every choice of\/ $\mathbb{Z}$-unary words representing $\sigma_1$ and $\sigma_2$, every $\ell>\max\{d(\sigma_1),d(\sigma_2)\}$ and every $k$ such that $k-\ell-2>\max\{|\wh{\sigma_1}|,|\wh{\sigma_2}|\}$. Then $\sigma_1=\sigma_2$.
\end{Lemma}

\begin{proof}
Suppose that $\sigma_1\ne\sigma_2$. Let $\tau$ be the longest common prefix of $\sigma_1$ and $\sigma_2$ (it exists by Proposition~\ref{Prop_LongestCommonPrefix}). We write $\sigma_1$ and $\sigma_2$ as $\sigma_1=\tau\rho_1$ and $\sigma_2=\tau\rho_2$ for some swords $\rho_1$ and $\rho_2$ such that either one of them is empty while the other is not or they both are non-empty and start with different letters. Consider some $\mathbb{Z}$-unary words $\alpha,\beta_1,\beta_2$ representing the swords $\tau,\rho_1,\rho_2$ respectively. Then, clearly, the $\mathbb{Z}$-unary words $\alpha\beta_1$ and $\alpha\beta_2$
represent the swords $\sigma_1$ and respectively $\sigma_2$. If the parameters $\ell$ and $k$ are calculated from these representations, then we have $\sigma^{(k)}_1=\tau^{(k)}\rho^{(k)}_1$ and $\sigma^{(k)}_2=\tau^{(k)}\rho^{(k)}_2$ whence $\rho^{(k)}_1=\rho^{(k)}_2$. Thus, the swords $\rho_1$ and $\rho_2$ that differ at their start should become equal when each $\omega$ in them is substituted with $k$. This is obviously not possible one of the swords is empty while the other is not since no ordinarization transform sends a non-empty sword to the empty word. Nor is it possible if $\rho_1$ and $\rho_2$ are non-empty and start with different letters because the starting letters of $\rho^{(k)}_1$ and $\rho^{(k)}_2$ remain the same as the ones of $\rho_1$ and respectively $\rho_2$. Thus, we arrive at a contradiction.
\end{proof}

Recall that an ordinary word $u$ is said to be \emph{periodic} if $u=v^n$ for some word $v$ and some $n\ge2$.
\begin{Lemma}\label{Lemma_Periodic_To_Burnside}
Let $\sigma$ be a normal sword. It is periodic if and only if the word $\sigma^{(k)}$ is periodic for every choice of a\/ $\mathbb{Z}$-unary word representing $\sigma$, every $\ell>d(\sigma)$ and every $k$ such that $k-\ell-2>|\wh{\sigma}|$.
\end{Lemma}

\begin{proof}
If $\sigma$ is a power then so is $\sigma^{(k)}$, and if $\sigma$ is an $\omega$-power, say, $\sigma=\tau^{\omega+q}$, then $\sigma^{(k)}=\left(\tau^{(k)}\right)^{k+q}$, and conditions imposed on $k$ guarantee  that $k+q\ge2$. Conversely, suppose that an ordinarization of the sword $\sigma$ is equal to $v^n$ for some word $v$ and some $n\ge2$. Then a $\mathbb{Z}$-unary word representing $\sigma$ should decompose as $\alpha_1\cdots\alpha_n$, where the ordinarization sends each factor $\alpha_i$, $i=1,\dots,n$, to the word $v$. If the parameters $\ell$ and $k$ are calculated from this representation of the sword $\sigma$ and $\tau_i$ is the normal sword represented by $\alpha_i$, $i=1,\dots,n$, we have $\sigma=\tau_1\cdots\tau_n$ and $\tau^{(k)}_i=v$ for each $i=1,\dots,n$. Since $|\wh{\tau_i}|\le |\wh{\sigma}|$, we can apply Lemma~\ref{Lemma_Equality_to_Burnside} to the swords $\tau_i$ and obtain that they are all equal. Therefore $\sigma$ is periodic.
\end{proof}

A central concept in Guba's papers~\cite{Guba1, Guba2} is the notion of the reduced form of a word relative to the Burnside variety $\mathfrak{B}_{m,k}$.  We are not going to reproduce here the original definition of this notion  because it is quite involved (it is given by simultaneous induction on five other notions) and because, as the reader will see, we can easily bypass the definition by working with a sufficient condition for a word to be in the reduced form (see Lemma~\ref{Lemma_Burnside_No_Long_words} below). First we reproduce a result which is crucial for our considerations; it constitutes the main point of the proof of Theorem 4.1 in~\cite{Guba1}.

\begin{Theorem}\label{THM_Guba_equal_reduced_forms}
Let $m\ge3$, $k\ge1$, and let $u$ and $v$ be words. The identity $u\is v$ holds in the variety $\mathfrak{B}_{m,k}$ if and only if $u$ and $v$ have the same reduced forms relative to $\mathfrak{B}_{m,k}$.
\end{Theorem}

Now we again assume that $k>m$ and, as above, let $\ell=k-m$. For short, words that are in the reduced form relative to $\mathfrak{B}_{m,k}$ will be called $\mathfrak{B}_{m,k}$-\emph{reduced} words. The following result that is a
consequence of Lemma~2.2 in~\cite{Guba1} is sufficient for our purposes.

\begin{Lemma}
\label{Lemma_Burnside_No_Long_words}
Let $k>m\ge3$ and $\ell=k-m$. A word is $\mathfrak{B}_{m,k}$-reduced if it contains no factors of the form $v^{2k-\ell-2}$.
\end{Lemma}

We use Lemma~\ref{Lemma_Burnside_No_Long_words} to prove that, for a normal sword $\sigma$ and large enough $k$, the word $\sigma^{(k)}$ is $\mathfrak{B}_{m,k}$-reduced. We start with considering swords of height 1.

\begin{Lemma}
\label{Lemma_Normal_to_reduced_Height_1}
Let $\sigma$ be a normal sword of height $1$ and $\ell>d(\sigma)$, $k-\ell-2>|\wh{\sigma}|$.
Then the word $\sigma^{(k)}$ is $\mathfrak{B}_{m,k}$-reduced.
\end{Lemma}

\begin{proof}
Since $h(\sigma)=1$, we have
\[
\sigma=u_0v_1^{\omega+q_1}u_1\cdots v_n^{\omega+q_n}u_n,
\]
for some ordinary words $u_0,u_1,\dots u_n,v_1,\dots,v_n$. Suppose that for some word $v$, the word $\sigma^{(k)}$ has a factor of the form $v^{2k-\ell-2}$. First assume that $v=v_i$ for some $i=1,\dots,n$. Since the sword $\sigma$ is normal, Lemma~\ref{Lemma_Periodic_Square} implies that the sword $v_{i-1}^2u_{i-1}v_i^{\omega+q_i}u_iv_{i+1}^2$ is not a factor of any $v_i$-periodic sword. Therefore, $v_i^{2k-\ell-2}$ is a factor of the word $v_{i-1}^2u_{i-1}v_i^{k+q_i}u_iv_{i+1}^2$. Consider the word $v_{i-1}^2u_{i-1}v_i^{q'_i}u_iv_{i+1}^2$ where $q'_i=2$ if $q_i\leq 2$ and $q'_i=q_i$ otherwise. Its length is not larger than $|\wh{\sigma}|$. But $|v_i^{k-\ell-2}|>k-\ell-2>|\wh{\sigma}|$. This implies that $v_i^{2k-\ell-2}$ cannot be a factor of $v_{i-1}^2u_{i-1}v_i^{k+q_i}u_iv_{i+1}^2$.

Thus, the word $v^{2k-\ell-2}$ is a product of words of the form $t_1v_i^{k+q_i}\ldots v_j^{k+q_j}t_2$ where $t_1$ is either a suffix of $u_i$ or equals to $v_{i-1}^{s_1}u_i$, $t_2$ is either a prefix of $u_{j+1}$ or equals to $u_{j+1}v_{j+1}^{s_2}$.  Let $L=\max\{|v_i|\mid \text{$v_i$ is a factor of the word $v$}\}$. Then
$|v|^{2k-\ell-2}\le |\wh{\sigma}|+Lk$. On the other hand, $|v|^{2k-\ell-2}\ge |v|k+k-\ell-2>|v|k+|\wh{\sigma}|$. This implies $|v|<L$, a contradiction.
\end{proof}

Now we consider the swords of larger height.

\begin{Lemma}~\label{Lemma_Normal_To_Reduced_Forms}
Let $\sigma$ be a normal  sword of height $h>1$ then, for all integers $k,p$ such that $p>p(\sigma)$ and $k-\ell-2>|\wh{\sigma}|$, the word $\sigma^{(k)}$ is a $\mathfrak{B}_{m,k}$-reduced form.
\end{Lemma}

\begin{proof}
We are going to prove this lemma by induction on height of the sword $\sigma$ using Lemma~\ref{Lemma_Normal_to_reduced_Height_1} as the basis of induction.

Let $\pi_0\rho_1^{\omega+q_1}\pi_1\ldots\rho_n^{\omega+q_n}\pi_n$ be a height representation of the sword $\sigma$.  Consider the sword $$\widetilde{\sigma}=\pi_0^{(k)}(\rho^{(k)}_1)^{\omega+q_1}\pi_1^{(k)}\ldots(\rho_n^{(k)})^{\omega+q_n}\pi_n^{(k)}$$ of height $1$.
Note that $\sigma^{(k)}=\widetilde{\sigma}^{(k)}$.

By the induction hypothesis  we have that all the words $\pi_i^{(k)}$ and $\rho_i^{(k)}$ are $\mathfrak{B}_{m,k}$-reduced forms. Let us prove now that $\widetilde{\sigma}$ is a normal sword. Indeed, the words $\rho_i$ are  not periodic by Lemma~\ref{Lemma_Periodic_To_Burnside}. Next, suppose that, for some index $i$, $(\rho_i^{(k)})^{\omega+q_i}\pi_i(\rho_{i+1}^{(k)})^{\omega+q_{i+1}}=(uv)^{\omega+q_i}u(vu)^{\omega+q_{i+1}}$. Let $\alpha_1, \alpha_2, \beta_1, \beta_2$ be swords such that $\alpha_1\alpha_2=\rho_i,\beta_1\beta_2=\rho_{i+1}$ and $\alpha_1^{(k)}=\beta_2^{(k)}=\pi_i^{(k)}=u, \alpha_2^{(k)}=\beta_1^{(k)}=v$. Again by Lemma~\ref{Lemma_Equality_to_Burnside} the following equalities hold $\alpha_1=\beta_2=\pi_i$, $\alpha_2=\beta_1$. Thus, $(\alpha_1\alpha_2)^{\omega+q_i}\alpha_1(\alpha_2\alpha_1)^{\omega+q_{i+1}}$ is a subsword of the normal sword $\sigma$. This implies that the sword $\widetilde{\sigma}$ is normal.

We proceed assuming that the word $\widetilde{\sigma}$ is not a $\mathfrak{B}_{m,k}$-reduced form and, for some word $T$, it contains $T^{2k-\ell-2}$ as a factor. Similarly to the proof of Lemma~\ref{Lemma_Normal_to_reduced_Height_1} suppose firstly that, for some index $i$, $T=\rho_i^{(k)}$. Again the word $\tau=(\rho_{i-1}^{(k)})^2\pi_{i-1}^{(k)}(\rho_{i-1}^{(k)})^{\omega+q_i}\pi_i^{(k)}(\rho_{i+1}^{(k)})^2$ can not be some factor of some $T$-periodic sword by Lemma~\ref{Lemma_Periodic_Square}. Thus, $|\tau|>|T^{2k-\ell-2}|$ and the length of $\tau^{(k)}$ does not exceed $|\wh{\sigma}|+|\rho_i^{(k)}|k$. On the other hand, $|T^{2k-\ell-2}|=|\rho_i^{(k)}|(2k-\ell-2)>|\rho_i^{(k)}|k+|\rho_i^{(k)}||\wh{\sigma}|$, a contradiction.

Suppose now  that $T$ has the form $t_1(\rho_i^{(k)})^{k+q_i}\ldots (\rho_j^{(k)})^{k+q_j}t_2$ where $t_1$ is a suffix of   $\rho^{(k)}_{i-1}\pi^{(k)}_{i-1}$, $t_2$ is a prefix of $\pi_{j+1}\rho^{(k)}_{j+1}$. Let $L$ be the maximum of $|\rho_i^{(k)}|$ where $\rho_i^{(k)}$ is a factor of $T$. Therefore, $$|T|\leq Lk+|t_1(\rho_i^{(k)})^{q'_i}\ldots (\rho_j^{(k)})^{q'_j}t_2|\leq Lk+L|\wh{\sigma}|.$$ But $|T|^{2k-\ell-2}=|T|k+|T|(k-\ell-2)>|T|(k+|\wh{\sigma}|)>L(k+|\wh{\sigma}|)$.

Hence, the word $\widetilde{\sigma}^{(k)}$ that equals to $\sigma^{(k)}$ is a $\mathfrak{B}_{m,k}$-reduced form.
\end{proof}

These lemmas lead us to the important

\begin{Proposition}~\label{Prop_Equality of omega through (k)-words}
  Let $\sigma_1,\sigma_2$ be two normal swords then the equality  $\sigma_1\is\sigma_2$ holds in $\mathfrak{E}$ ($\mathfrak{E}_{fin}$) if and only if $\sigma_1$ and $\sigma_2$ are equal.
\end{Proposition}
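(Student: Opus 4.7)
The ``if'' direction is immediate: if $\sigma_1$ and $\sigma_2$ are the same normal sword, then in particular they represent the same $Z$-unary word up to $\mathcal{S}$, and by Proposition~\ref{Prop_Identities_in_Swords} the identity holds in every epigroup. So I would focus on the converse, and work with the harder hypothesis: assume $\sigma_1 \is \sigma_2$ holds throughout $\mathcal{E}_{fin}$ (which follows from it holding throughout $\mathcal{E}$). The plan is to transfer this epigroup identity to an identity of ordinary words in a suitable Burnside variety, apply Guba's theorem, and then invoke Lemma~\ref{Lemma_Normal_To_Reduced_Forms} to read off $\sigma_1 = \sigma_2$.

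Choose integers $k$ and $p$ simultaneously large enough that Lemma~\ref{Lemma_Normal_To_Reduced_Forms} applies to both $\sigma_1$ and $\sigma_2$; concretely, $p > \max\{p(\sigma_1),p(\sigma_2)\}$ and $k-p-2$ exceeds the height-representation bounds appearing in the lemma. The key observation is that any finite semigroup $S$ in the Burnside variety $\mathrm{var}[x^{k-p}\is x^{2k-p}]$ is automatically an epigroup in which $x^\omega = x^k$, hence $x^{\omega+q} = x^{k+q}$ for every integer $q$. Reading the $Z$-unary word $\sigma_i$ as an epigroup term and evaluating in $S$ therefore produces exactly the ordinary word $\sigma_i^{(k)}$. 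Since $\sigma_1 \is \sigma_2$ is assumed to hold in $\mathcal{E}_{fin}$, it holds in each such $S$, so the ordinary semigroup identity $\sigma_1^{(k)} \is \sigma_2^{(k)}$ holds in every finite member of $\mathrm{var}[x^{k-p}\is x^{2k-p}]$. Proposition~\ref{Prop_Finite_Members_Burnside} then promotes this to an identity of the full variety.

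Now Theorem~\ref{THM_Guba_equal_reduced_forms} says that two words satisfying an identity of $\mathrm{var}[x^{k-p}\is x^{2k-p}]$ have the same $\mathcal{B}_{k,p}$-reduced form. But by Lemma~\ref{Lemma_Normal_To_Reduced_Forms}, for our choice of $k,p$ each of $\sigma_1^{(k)}$ and $\sigma_2^{(k)}$ is already a $\mathcal{B}_{k,p}$-reduced form. Uniqueness of the reduced form thus yields the equality of ordinary words $\sigma_1^{(k)} = \sigma_2^{(k)}$. Applying the second clause of Lemma~\ref{Lemma_Normal_To_Reduced_Forms} in the reverse direction (distinct normal swords have distinct $(k)$-forms for large $k$) then gives $\sigma_1 = \sigma_2$ as normal swords, completing the argument.

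The main obstacle, in my view, is the bookkeeping in the transfer step rather than any deep new idea: one must be careful that the substitution $\omega \mapsto k$ really does turn the epigroup term $\sigma_i$ into the ordinary word $\sigma_i^{(k)}$ when interpreted inside a finite semigroup in $\mathrm{var}[x^{k-p}\is x^{2k-p}]$, because the pseudoinverse $\overline{x}$ must become $x^{k-1}$ uniformly. Once this is checked, everything else is an application of results already assembled in the previous subsection, and the single identity $\sigma_1^{(k)} = \sigma_2^{(k)}$ of ordinary words suffices to recover the sword-level equality through Lemma~\ref{Lemma_Normal_To_Reduced_Forms}.
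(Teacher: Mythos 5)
Your proposal is correct and follows essentially the same route as the paper: pass from the epigroup identity to the word identity $\sigma_1^{(k)}\is\sigma_2^{(k)}$ in $var[x^{k-p}\is x^{2k-p}]$ (using that its finite members are finite epigroups and Proposition~\ref{Prop_Finite_Members_Burnside} to cover the whole variety), then apply Theorem~\ref{THM_Guba_equal_reduced_forms} together with Lemma~\ref{Lemma_Normal_To_Reduced_Forms} to get $\sigma_1^{(k)}=\sigma_2^{(k)}$ and hence $\sigma_1=\sigma_2$. The transfer detail you flag (that $x^{\omega+q}$ evaluates to $x^{k+q}$ once $p>p(\sigma_i)$) is exactly what the paper records just before Theorem~\ref{THM_Guba_equal_reduced_forms}, so no new argument is needed there.
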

\begin{proof}
 Remind that every Burnside semigroup is an epigroup. Also, in view of Proposition~\ref{Prop_Finite_Members_Burnside} the free Burnside  semigroup satisfies any identity that holds in $\mathfrak{E}_{fin}$. Thus, the identities $\sigma_1\is\sigma_2$ and, therefore, $\sigma^{(k)}_1\is\sigma_2^{(k)}$ holds in the variety $var[x^{2k-\ell}\is x^{k-\ell}]$ for large enough $k,\ell$ and $m=k-\ell$. By Lemmas~\ref{Lemma_Normal_to_reduced_Height_1} and~\ref{Lemma_Normal_To_Reduced_Forms} the words $\sigma^{(k)}_1$ and $\sigma_2^{(k)}$ are
  $\mathfrak{B}_{m,k}$-reduced forms.
 Then by Theorem~\ref{THM_Guba_equal_reduced_forms} the words $\sigma^{(k)}_1$ and $\sigma^{(k)}_2$ are  equal.
 Hence, by Lemma~\ref{Lemma_Equality_to_Burnside} the swords $\sigma_1$ and $\sigma_2$ coincide.

\end{proof}

\begin{proof}[ of Theorem~\ref{THM_Main}]
 Let $\sigma_1\is \sigma_2$ be any identity that holds in $\mathfrak{E}_{fin}$ or $\mathfrak{E}$. By Proposition~\ref{Prop_Normalization} there is an algorithm that constructs normal forms $\alpha_1,\alpha_2$ of the swords $\sigma_1,\sigma_2$ respectively and the identity $\alpha_1\is\alpha_2$, by Proposition~\ref{Prop_Nornal_Equalities},  also holds in $\mathfrak{E}_{fin}$ or $\mathfrak{E}$.
By Proposition~\ref{Prop_Equality of omega through (k)-words} this identity is equivalent to the equality of the swords $\alpha_1$ and $\alpha_2$ which we can check using algorithm from Proposition~\ref{Prop_LongestCommonPrefix}.

The following proof was given by M. Volkov in a verbal discussion. Let us prove that the identity basis of $\mathfrak{E}$ is infinite. For a prime number $p$, consider a non-trivial group $G$ satisfying the identity $x^p\is 1$. Let  $H$ be  the semigroup $G$ with adjoined $0$. Define a unary operation on $H$ by the following rule: for all $x\neq 1$ we set $\ov{x}=0$, and $\ov{1}=1$.
It is clear that the unary semigroup $H$ satisfies the first five identities.

Let $q$ be a prime number distinct from $p$ and let $x\in H$. Then $x^q\neq 1$ if and only if $x\neq 1$. Therefore, for $x\neq 1$, $\ov{x^q}=0=\ov{x}^q$. Thus, the identity $\ov{x^q}\is \ov{x}^q$ holds in $H$. Meanwhile, taking $x\in G$ distinct from $1$
we obtain $1=\ov{x^p}\neq \ov{x}^p=0$. Hence, the identity $\ov{x^p}\is\ov{x}^p$ fails in $H$.
\end{proof}

\section{Acknowledgements}
\label{sec:ack}

The author wants to thank M. Volkov who drew her attention to this problem and helped to improve the paper, the anonymous reviewer for many valuable remarks and suggestions, A. Popovich for help in finishing this work and opportunity to discuss the results and L. Shevrin for useful comments. The author acknowledges support from the Presidential Programme ``Leading Scientific Schools of the Russian Federation'', project no.\ 5161.2014.1, the Russian Foundation for Basic Research, project no.\ 14-01-00524, the Ministry of Education and Science of the Russian Federation, project no.\ 1.1999.2014/K, and the Competitiveness Program of Ural Federal University.

\bibliographystyle{abbrv}
\bibliography{epigroups}
\label{sec:biblio}

\end{document}